\newtheorem{theorem}{Theorem}[section]
\newtheorem{lemma}[theorem]{Lemma}
\theoremstyle{definition}
\newtheorem{definition}[theorem]{Definition}
\theoremstyle{remark}
\newtheorem{remark}[theorem]{Remark}
\numberwithin{equation}{section}
\DeclareMathOperator{\divg}{div}
\begin{document}

\title[Steady viscous channel Flows]{On an inhomogeneous slip-inflow boundary value problems for a steady viscous compressible channel flows }

\author{Wengang Yang}
\address{School of Mathematics and Statistics, Wuhan University, Wuhan, Hubei Province, 430072, People's Republic of China.}
\email{yangwg@whu.edu.cn}
\begin{abstract}
We prove the existence and uniqueness of strong solutions to the steady isentropic 
compressible Navier-Stokes equations with inflow boundary conditions for density and 
mixed boundary conditions for the velocity around a shear flow. In particular, the 
Dirichlet boundary condition on inflow and outflow part of the boundary while the 
full Navier  boundary conditions on the wall $\Gamma_{0}$ for the velocity filed are 
considered. For our result, there are no  restrictions on the  amplitude of friction 
coefficients $\alpha$, and the appropriately large  hypothesis for the viscous 
coefficients $\mu$ is enough. One of the  substantial ingredients of our proof is an 
elegant transformation induced by the flow field. With the help of this 
transformation,  we can overcome the 
difficulties caused by the hyperbolicity of continuity equation, establish the a 
priori estimates for a linearized  system and apply the fixed point arguments.
\end{abstract}
\keywords{inhomogeneous boundary conditions, compressible Navier-Stokes system, strong solution}






\maketitle

\section{introduction and main results}
In this paper, we consider the steady isentropic compressible Navier-Stokes 
equations with inflow boundary condition in a two-dimensional tube $\Omega=(0,  
1)\times(0, 1)$ near a shear flow. It is well known that the Navier–Stokes 
equations for a steady isentropic compressible viscous flow is a mixed
system of hyperbolic-elliptic type. The momentum equations are an elliptic system in 
the velocity, while the continuity equation is hyperbolic in the density. Therefore, 
it is necessary to prescribe the density on the part of inflow boundary $(\mathbf{u} 
\cdot \mathbf{n} <0)$\, where $\mathbf{n}$ is the outward unit normal to the part of 
inflow boundary.  The inflow boundary value problem considered in this article reads 
as follows:
\begin{equation}\label{S-NS}
\begin{aligned}
\operatorname{div}(\rho \mathbf{u}) &=0 & &\text { in } \Omega, \\
\operatorname{div}(\rho \mathbf{u} \otimes \mathbf{u})-\mu \Delta \mathbf{u}-(\mu+\nu) \nabla \operatorname{div}{\bf u}+\nabla P &=0 & &\text { in } \Omega, \\
\mathbf{u} &=\mathbf{u}_{0} & &\text { on } \Gamma_{\text {in }} \cup \Gamma_{\text {out }}, \\
\rho &=\rho_{\text {in }} & &\text { on } \Gamma_{\text {in }}, \\
\mathbf{u} \cdot \mathbf{n} &=0 & &\text { on } \Gamma_{0}, \\
2\mu\mathbf{n}\cdot D({\bf u})\cdot \tau +\alpha{\bf u}\cdot \tau &= b & &\text { on } \Gamma_{0},
\end{aligned}
\end{equation}
where ${\bf u}:\mathbb{R}^2\rightarrow\mathbb{R}^2$ is the unknown velocity filed of the fluid and  $\rho:\mathbb{R}^2\rightarrow\mathbb{R}$ is the unknown density. The $D(\bf{ u})$ denotes the symmetric part of the velocity gradient, more precisely,
\begin{equation*}
  D({\bf u})=\frac{1}{2}(\nabla {\bf u}+(\nabla{\bf u})^\mathrm{T}).
\end{equation*}
The constant viscosity coefficients $\mu$ and $\nu$ satisfy $\mu > 0,\nu+\mu \geq 0$ and $\alpha\geq 0$ is a friction coefficient. The pressure P is usually determined through the equation of states. In the case of isentropic flows, the pressure P is given by $P(\rho)=\rho^{\gamma}$  with $\gamma>1$ being the heat ratio. Next, ${\bf n}$ and $\tau$ are outer normal and tangent vectors to $\partial\Omega$. The boundary $\partial\Omega$ is naturally divided into the inflow part $\Gamma_{in}$, the outflow part $\Gamma_{out}$, and the impermeable wall $\Gamma_0$, more precisely,
\begin{equation*}
  \begin{aligned}
    \Gamma_{in}&=\{x_1=0,0\leq x_2\leq 1\},\\
    \Gamma_{out}&=\{x_1=1,0\leq x_2\leq 1\},\\
    \Gamma_{0}&=\{0\leq x_1\leq 0,x_2=0\}\cup\{0\leq x_1\leq 0,x_2=1\}.
  \end{aligned}
\end{equation*}
\par
The flow of a viscous, compressible fluid were governed by the compressible 
Navier-Stokes system, which has been extensively studied. The general existence 
results for weak solution to stationary compressible Navier-Stokes system are given 
by the work of Lions\cite{1998L} under the 
assumption that $\gamma>1$ in two dimensions and $\gamma\geq\frac{5}{3}$ in three 
dimensions. Later, this results was improved with the weaker assumptions on the 
specific heat ration $\gamma>\frac{3}{2}$ in three dimensions by 
Feireisl\cite{2001F} for nonsteady equations. By adapting Feireisl's nonsteady 
approach, the authors in \cite{NN2002} show that $\gamma>\frac{3}{2}$ is 
also valid  to the case of steady system. An overview of these results is 
given in the monograph \cite{NS2004,2004F}. Several refined results for steady 
compressible flow were given in \cite{JMW2010,JZ2011,PW2015}. Let us remark that all 
those results were obtained under various homogeneous boundary condition.
\par
The strong solutions of steady compressible Navier-Stokes equations with homogeneous 
boundary condition has been studied in 
\cite{2000CJ,2011JK,1996NNK,1986VZ,1987V}. As to the existence of strong 
solutions to the stationary Navier-Stokes system  with the inhomogeneous boundary 
conditions, the authors in \cite{KK1997} proved the existence of strong solutions to 
the stationary problems with an inflow boundary conditions for the density and 
Dirichlet boundary condition for the velocity in a smooth two dimension domain 
$\Omega$ under the assumption that the Reynolds number is small. A mass of 
researches showed that the regularity of the strong solutions is restricted by the 
geometry of the boundary\cite{KS2006}. In \cite{KK2002,2004K,2018OJ}, the authors 
studied the existence and regularity of solutions to an inflow boundary value 
problem under the assumption that the viscosity coefficient $\mu$ is large enough on 
a polygon domain. In \cite{P2009}, T. Piasecki proved the existence of strong 
solution around a constant equilibrium with an inflow boundary condition for the 
density and 
the full Navier boundary conditions for  the velocity filed under the assumption 
that the friction coefficient $\alpha$ is large enough. Later, the authors obtained 
similar results in a cylinder domains \cite{P2010}. It is worth mentioning that the 
validity for compressible perturbation of a Poiseuille  type flow under the same 
boundary conditions as before  is also obtained in \cite{MP2014}. In contrast to the 
boundary conditions prescribed above, the authors  established the existence of 
strong solutions near the constant state ${\bf u}=0, \rho=1$ with Dirichlet boundary 
condition on $\Gamma_{in}$ and $\Gamma_{out}$ while  slip without friction boundary 
conditions on the wall $\Gamma_0$  in \cite{GJZ2014}. Recently, the authors in 
\cite{2018FN} studied the 
existence of weak solutions to the stationary compressible Navier–Stokes system for 
arbitrarily 
large boundary data under additional physical hypotheses called molecular hypothesis 
and positive compressibility in 2D or 3D domains.  One can refer to 
\cite{2015DJJY,PRS2008} for more results concerning the existence of strong 
solutions with inhomogeneous boundary conditions.
\par
The goal of this paper is to investigate the existence of strong solutions near the 
shear flow to the steady isentropic compressible Navier-Stokes system with inflow 
boundary conditions in a square. We impose the Dirichlet boundary condition on 
inflow and outflow part of the boundary, and the full Navier  boundary conditions on 
the wall $\Gamma_{0}$ for the velocity.  Due to the hyperbolicity of the density in 
the continuity equation, it is natural to prescribe the density on the inflow part 
of the boundary. Let us remark here that  we do not need any restrictions on the 
amplitude of friction coefficients $\alpha$, and an assumption that $\mu>\frac{1}{2 
\pi^2}$ is enough. It is worth noting that most of the results discussed above 
investigated the structural stability around a constant  equilibrium, while our 
background solution is a shear flow.
\par Let us introduce the perturbed flow. Consider the shear flow $(\rho_0=1,U_0)$, with $U_0=(1+x_2,0)=(\bar U ,0)$. It is obviously that the shear flow $(\rho, U_0)$ satisfy the following system:
\begin{equation*}
\begin{aligned}
\operatorname{div}(\rho_0 {U_0}) &=0 & &\text { in } \Omega, \\
\operatorname{div}(\rho_0 {U_0} \otimes {U_0})-\mu \Delta {U_0}-(\mu+\nu) \nabla \operatorname{div} U_0 +\nabla P_0 &=0 & &\text { in } \Omega, \\
{U_0} &={U_0} & &\text { on } \Gamma_{\text {in }} \cup \Gamma_{\text {out }}, \\
\rho_0 &=1 & &\text { on } \Gamma_{\text {in }}, \\
{U_0} \cdot \mathbf{n} &=0 & &\text { on } \Gamma_{0}, \\
2\mu\mathbf{n}\cdot D({ U_0})\cdot \tau +\alpha U_0\cdot \tau &= \tilde b & &\text { on } \Gamma_{0},
\end{aligned}
\end{equation*}
where $\tilde b$ is determined when $U_0$ and $\alpha$ are given. Our aim here is to study the existence and uniqueness of strong solution in a square around the shear flow $(\rho_0, U_0)$.\,
To formulate our main results, it's convenient to define the quantity $D_0$ as
\begin{equation}\label{diff-of-inti-dad}
D_0:=\|{\bf u}_0 -U_0\|_{{2-\frac{1}{p},p};\Gamma_{in}\cap\Gamma_{out}}+\|b-\tilde b\|_{1-\frac{1}{p},p;\Gamma_{0}}+\|\rho_{in}-1\|_{{1,p};\Gamma_{in}}
\end{equation}
which measure the distance of the initial data away from the shear flow $U_{0}$.
\begin{theorem}
Suppose that $2<p<\infty$,\,if $D_0$ given by (\ref{diff-of-inti-dad}) is small enough and the viscosity coefficients $\mu>\frac{1}{2\pi^2}$, then there exists a unique solution $({\bf u},\rho)\in W^{2,p}(\Omega)\times W^{1,p}(\Omega)$ to the system $(\ref{S-NS})$ such that
\begin{equation}
\|{\bf u}-U_0\|_{{2,p};\Omega}+\|\rho-1\|_{{1,p};\Omega}\leq E(D_0),
\end{equation}
where $E(D_0)$ can be arbitrarily small provided that the quantity $D_0$ is small enough.
\end{theorem}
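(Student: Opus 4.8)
The plan is to recast \eqref{S-NS} as a perturbation of the shear flow and to solve the perturbed system by a contraction argument built on sharp estimates for a suitable linearization. Writing $\mathbf{u}=U_0+\mathbf{v}$, $\rho=1+\sigma$ and subtracting the shear-flow system, one uses $\operatorname{div}U_0=0$, $\Delta U_0=0$ and $(U_0\cdot\nabla)U_0=0$ to obtain a continuity equation $(U_0+\mathbf{v})\cdot\nabla\sigma+(1+\sigma)\operatorname{div}\mathbf{v}=0$ and a momentum equation $(U_0\cdot\nabla)\mathbf{v}+(v_2,0)-\mu\Delta\mathbf{v}-(\mu+\nu)\nabla\operatorname{div}\mathbf{v}+\gamma\nabla\sigma=\mathbf{Q}$, where $(v_2,0)=(\mathbf{v}\cdot\nabla)U_0$ encodes the non-constancy of the background and $\mathbf{Q}$ is a sum of terms at least quadratic in $(\mathbf{v},\sigma,\nabla\mathbf{v})$; the boundary data become small in the sense of \eqref{diff-of-inti-dad}, namely $\mathbf{v}=\mathbf{u}_0-U_0$ on $\Gamma_{in}\cup\Gamma_{out}$, $\sigma=\rho_{in}-1$ on $\Gamma_{in}$, $\mathbf{v}\cdot\mathbf{n}=0$ and $2\mu\,\mathbf{n}\cdot D(\mathbf{v})\cdot\tau+\alpha\,\mathbf{v}\cdot\tau=b-\tilde b$ on $\Gamma_0$.

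To tame the hyperbolicity of the continuity equation I would pass to coordinates adapted to the streamlines of the flow: since $\operatorname{div}(\rho\mathbf{u})=0$, for data close to $U_0$ the streamlines run monotonically from $\Gamma_{in}$ to $\Gamma_{out}$, and the induced Lagrangian-type change of variables turns the continuity equation into an ordinary differential equation along the first coordinate, to be integrated from $\Gamma_{in}$ where $\sigma$ is prescribed; this yields a $W^{1,p}(\Omega)$ bound for $\sigma$ in terms of $\|\rho_{in}-1\|_{1,p;\Gamma_{in}}$ and of $\operatorname{div}\mathbf{v}$. To keep the density--velocity coupling subcritical I would combine this with the effective viscous flux $\omega:=(2\mu+\nu)\operatorname{div}\mathbf{v}-\gamma\sigma$: using $-\mu\Delta\mathbf{v}=\mu\nabla^{\perp}\operatorname{curl}\mathbf{v}-\mu\nabla\operatorname{div}\mathbf{v}$ the momentum equation becomes $(U_0\cdot\nabla)\mathbf{v}+(v_2,0)+\mu\nabla^{\perp}\operatorname{curl}\mathbf{v}-\nabla\omega=\mathbf{Q}$, so that $\omega$ and $\operatorname{curl}\mathbf{v}$ satisfy second-order elliptic equations (Poisson-type plus lower order) and are one order more regular than a generic component of $\nabla\mathbf{v}$, while substituting $\operatorname{div}\mathbf{v}=(2\mu+\nu)^{-1}(\omega+\gamma\sigma)$ into the continuity equation produces a genuinely \emph{damped} transport equation for $\sigma$ with damping coefficient $\gamma/(2\mu+\nu)>0$.

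This motivates the linear problem: freezing the transport coefficient $\bar{\mathbf{u}}=U_0+\bar{\mathbf{v}}$ and the quadratic source in a small ball of $W^{2,p}\times W^{1,p}$, solve the damped transport equation for $\sigma$ in the flow-adapted coordinates, solve the elliptic (Lam\'e-type) problem for $\mathbf{v}$ with Dirichlet data on $\Gamma_{in}\cup\Gamma_{out}$ and Navier data on $\Gamma_0$, and estimate. The velocity estimate is the standard $W^{2,p}$ elliptic regularity for the Lam\'e operator with these mixed conditions, where one must verify that the right angles of the square and the compatibility of the two boundary conditions at the four corners do not obstruct $W^{2,p}$ regularity; the density estimate is the transport estimate above, the damping keeping the constant uniform. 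Because $\gamma\nabla\sigma$ in the momentum equation and $\operatorname{div}\mathbf{v}$ in the continuity equation are of the same order, the estimates close only after absorbing a residual low-order contribution carried by $(v_2,0)=(\mathbf{v}\cdot\nabla)U_0$ into the viscous dissipation through a Poincar\'e inequality on $\Omega=(0,1)^2$; since (after reducing to homogeneous boundary data) $v_2$ vanishes on all of $\partial\Omega$ while the first Dirichlet eigenvalue of $-\Delta$ on the unit square equals $2\pi^2$, this absorption requires precisely $\mu>\tfrac{1}{2\pi^2}$. One thus gets $\|\mathbf{v}\|_{2,p;\Omega}+\|\sigma\|_{1,p;\Omega}\le C\,D_0$ for the linearized problem, with $C$ independent of the radius.

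Finally, the map $(\bar{\mathbf{v}},\bar\sigma)\mapsto(\mathbf{v},\sigma)$ so constructed is shown to send a ball of radius $\sim D_0$ into itself and to be a contraction there: the nonlinear terms are handled using $2<p<\infty$, i.e.\ $W^{2,p}(\Omega)\hookrightarrow C^1(\overline\Omega)$, $W^{1,p}(\Omega)\hookrightarrow C(\overline\Omega)$ and the algebra property of $W^{1,p}$ in two dimensions, so each such term costs a factor of the radius, and the dependence of the flow-adapted change of variables on $\bar{\mathbf{v}}$ is likewise Lipschitz for $D_0$ small. Banach's fixed point theorem then gives the unique solution near the shear flow with $\|\mathbf{u}-U_0\|_{2,p;\Omega}+\|\rho-1\|_{1,p;\Omega}\le E(D_0)$, $E(D_0)=C\,D_0\to0$ as $D_0\to0$. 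I expect the main obstacle to be exactly the closed a priori estimate for the linearized coupled system: the hyperbolicity of the density equation (handled by the flow transformation and the damping produced by the effective flux), the non-constant background (which forces the Poincar\'e absorption and hence the threshold $\mu>\tfrac{1}{2\pi^2}$), and the corner behaviour of this mixed elliptic boundary value problem all have to be controlled at once, with constants independent of the iteration.
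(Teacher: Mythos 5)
Your strategy is essentially the one the paper follows: perturb around the shear flow, homogenize the boundary data, pass to flow-adapted coordinates so that the transport operator in the continuity equation becomes a pure $\partial_{z_1}$-derivative integrable from $\Gamma_{in}$, prove closed a priori estimates for a linearization with the frozen transport coefficient, identify the threshold $\mu>\tfrac{1}{2\pi^2}$ from absorbing $(\mathbf{v}\cdot\nabla)U_0=(v_2,0)$ into the viscous dissipation via Poincar\'e-type inequalities, handle the corners of the square by reducing the Navier condition on $\Gamma_0$ to an interior problem, and conclude by Banach's fixed point theorem. The one genuinely different ingredient is how you propose to close the density--velocity coupling: you introduce the effective viscous flux $\omega=(2\mu+\nu)\operatorname{div}\mathbf{v}-\gamma\sigma$ and exploit the resulting damped transport equation for $\sigma$ with damping $\gamma/(2\mu+\nu)$, whereas the paper never uses damping. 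Instead it integrates the (undamped) transport equation directly along $z_1$ via an explicit solution operator $S$, obtains $\|\partial_{z_2}w\|_{L^p}$ by testing the differentiated momentum equation with a Bogovskii-operator field, and controls the remaining piece through the estimate $\|\operatorname{curl}\mathbf{v}\|_{W^{1,p}}\le C(\epsilon\|\mathbf{v}\|_{W^{2,p}}+\dots)$ imported from Guo--Jiang--Zhou, which is the genuinely delicate step (the behaviour of $\operatorname{curl}\mathbf{v}$ near $\Gamma_{in}\cup\Gamma_{out}$ is not controlled by a naive elliptic argument). Your effective-flux route is standard in the steady compressible literature and should also work on this finite channel, and it buys a cleaner $L^p$ bound for $\sigma$ without the Bogovskii test function; the paper's route avoids having to propagate the extra regularity of $\omega$ through the mixed boundary conditions but must borrow the curl estimate. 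Two points you should be aware of if you carry this out: the paper also needs an existence mechanism for the linearized problem (it uses a Galerkin scheme coupled with the transport solution operator), which your sketch leaves implicit; and your eigenvalue bookkeeping for the threshold is heuristic --- the paper obtains $\mu>\tfrac{1}{2\pi^2}$ by combining the Korn inequality $\mu\pi^2\|\nabla\mathbf{v}\|_{L^2}^2\le 2\mu\|D(\mathbf{v})\|_{L^2}^2+\nu\|\operatorname{div}\mathbf{v}\|_{L^2}^2$ with $|\int v_1v_2|\le\tfrac12\|\nabla\mathbf{v}\|_{L^2}^2$, so the constant comes from the Korn--Poincar\'e constant on the slab rather than from the Dirichlet eigenvalue $2\pi^2$ of the square; you would need to check that your version of the absorption actually produces the same (and not a worse) threshold.
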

This paper will be organized as follows. In Section 2, we linearized the original problem (\ref{S-NS}) around the shear flow $(\rho_0,U_0)$. Next, we introduce an elegant transformation to overcome the difficulties caused by the hyperbolicity of the continuity equation. In Section 3, we establish the
basic and higher order energy estimates to the linearized equations. In order to 
overcome the difficulties caused by  the nonlinearity for the
Navier–Stokes system, we derive the uniform
$W^{2,p}(\Omega) \times W^{1,p}(\Omega),(2 < p < \infty)$ estimate for the solution 
of the linearized system.  An iteration scheme is also developed to study the 
existence and uniqueness of strong solution. In Section 4, we construct approximated 
solutions by a Galerkin method and obtain the existence and regularity of weak 
solution to the linearized system. In the Section 5, we show that the iteration 
scheme designed in the Section 3 is a contraction and prove our main result Theorem 
1.1 finally.
\section{Reformulation of problem}
It is convenient to convert the inhomogeneous boundary conditions of the original problems into homogeneous one. To this end, introduce a function $\tilde{\bf u}\in W^{2,p}(\Omega)$ such that
\begin{equation}\label{homo-BC}
\begin{aligned}
&\tilde{\mathbf{u}}=\mathbf{u}_0 - U_0 &   &on\,\Gamma_{in}\cup \Gamma_{out},\\
&\tilde{\mathbf{u}} \cdot \mathbf{n} =0  &  &on\, \Gamma_0,\\
&2\mu\mathbf{n}\cdot D(\tilde{\bf u})\cdot \tau +\alpha\tilde{\bf u}\cdot \tau = 0 & &\text { on } \Gamma_{0}.
\end{aligned}
\end{equation}
Indeed, one can construct $\tilde{\bf u}$ as a solution of the following Lam$\acute{e}$ system
\begin{equation*}
-\mu \Delta \tilde{\bf u}-(\mu+\nu) \nabla \operatorname{div}\tilde{\bf u}=0 \quad\quad\text { in } \Omega, \\
\end{equation*}
with boundary condition (\ref{homo-BC}).Consequently,
\begin{equation}
\|\tilde{\bf u}\|_{W^{2,p}(\Omega)}\leq C|{\bf u}_0 -U_0|_{W^{2-\frac{1}{p},p}(\Gamma_{in}\cap{\Gamma_{out}})}.
\end{equation}
We denotes the differences
$${\mathbf{v}}=\mathbf{u}-\tilde{\mathbf{u}}-U_0, \, w=\rho-1. $$
Then, one can yields $(\bar{\mathbf{u}}, \bar{\rho})$ satisfy the following system 
by straightforward computation,
\begin{equation}\label{Lin-Homo-sys}
\begin{aligned}
\ \bar U\partial_1 w+({\bf v}+\tilde{\mathbf{u}})\cdot \nabla w + \operatorname{div}{\bf v} &=F({\bf v},w) & &\text { in } \Omega, \\
\ \bar U \partial_1 {\bf v}+{\bf{v}}\cdot \nabla U_0+\gamma \nabla w-\mu \Delta {\bf v}-(\mu+\nu) \nabla \operatorname{div}{\bf v} &=G({\bf v},w) & &\text { in } \Omega, \\
\ {\bf v} &=0 & &\text { on } \Gamma_{\text {in }} \cup \Gamma_{\text {out }}, \\
\ w &=\rho_{\text {in }}-1 & &\text { on } \Gamma_{\text {in }}, \\
\ {\bf v} \cdot \mathbf{n} &=0 & &\text { on } \Gamma_{0},\\
2\mu\mathbf{n}\cdot D({\bf v})\cdot \tau +\alpha{\bf v}\cdot \tau &= B & &\text { on } \Gamma_{0},
\end{aligned}
\end{equation}
where
\begin{equation}\label{Souc}
\begin{aligned}
B=&b-\tilde b,\\
F({\bf v}, w)=&-\operatorname{div} \tilde{\mathbf{u}}-w \operatorname{div}({\bf v}+\tilde{\mathbf{u}}), \\
G({\bf v}, w)=&-(w+1)\left({\bf v}+\tilde{\mathbf{u}}+U_{0}\right) \cdot \nabla \tilde{\mathbf{u}}-(w+1)({\bf v}+\tilde{\mathbf{u}}) \cdot \nabla {\bf v}\\
&-w U_{0} \cdot \nabla {\bf v}-\tilde{\bf{u}}\cdot \nabla U_0 -w({\bf v}+\tilde{\mathbf{u}})\cdot \nabla U_0\\
&+\mu \Delta \tilde{\mathbf{u}}+(\mu+\nu) \nabla \operatorname{div} \tilde{\mathbf{u}}+\gamma\left[(w+1)^{\gamma-1}-1\right] \nabla w.
\end{aligned}
\end{equation}
A straightforward computation gives the estimates of $F$ and $G$ as follows.
\begin{lemma}\label{FG-est}
Let $F({\bf v},w)$ and $G({\bf v},w)$ be  defined by (\ref{Souc}).Then we have
\begin{equation}\label{est-sou-x}
\begin{aligned}
&\| F({\bf v},w)\|_{W^{1,p}}+\| G({\bf v},w)\|_{L^p}\\
&\leq C[(\|{\bf  v}\|_{W^{2,p}}+\|w\|_{W^{1,p}})^3+(\|{\bf  v}\|_{W^{2,p}}+\|w\|_{W^{1,p}})^2]\\
&+E(D_0)(\|{\bf  v}\|_{W^{2,p}}+\|w\|_{W^{1,p}})+\|\tilde{\bf u}\|_{W^{2,p}}.
\end{aligned}
\end{equation}
\end{lemma}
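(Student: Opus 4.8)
The plan is to estimate each term appearing in $F(\mathbf v,w)$ and $G(\mathbf v,w)$ separately, using the algebra property of $W^{1,p}(\Omega)$ for $p>2$ (so that $W^{1,p}$ is a Banach algebra and $W^{2,p}\hookrightarrow W^{1,\infty}$ in two dimensions), together with the a priori bound $\|\tilde{\mathbf u}\|_{W^{2,p}}\le C D_0$ from the reformulation step, and the smallness of $D_0$. First I would record the multiplicative inequalities that will be used repeatedly: for $f,g\in W^{1,p}(\Omega)$, $\|fg\|_{W^{1,p}}\le C\|f\|_{W^{1,p}}\|g\|_{W^{1,p}}$; for $f\in W^{2,p}$ and $g\in W^{1,p}$, $\|fg\|_{W^{1,p}}\le C\|f\|_{W^{2,p}}\|g\|_{W^{1,p}}$ and $\|f\,\nabla g\|_{L^p}\le C\|f\|_{W^{1,\infty}}\|\nabla g\|_{L^p}\le C\|f\|_{W^{2,p}}\|g\|_{W^{1,p}}$; and $\|f g\|_{L^p}\le C\|f\|_{W^{1,p}}\|g\|_{W^{1,p}}$. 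I would also note that $U_0=(1+x_2,0)$ is smooth with all derivatives bounded on $\bar\Omega$, so factors involving $U_0$ or $\nabla U_0$ contribute only harmless constants.

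Next I would go through $F(\mathbf v,w)=-\operatorname{div}\tilde{\mathbf u}-w\operatorname{div}(\mathbf v+\tilde{\mathbf u})$ in the $W^{1,p}$ norm. The first term is bounded by $\|\tilde{\mathbf u}\|_{W^{2,p}}$, which accounts for the last summand on the right-hand side of (\ref{est-sou-x}). For the second term, the algebra estimate gives $\|w\operatorname{div}\mathbf v\|_{W^{1,p}}\le C\|w\|_{W^{1,p}}\|\operatorname{div}\mathbf v\|_{W^{1,p}}\le C\|w\|_{W^{1,p}}\|\mathbf v\|_{W^{2,p}}$, a quadratic term of the form $(\|\mathbf v\|_{W^{2,p}}+\|w\|_{W^{1,p}})^2$; and $\|w\operatorname{div}\tilde{\mathbf u}\|_{W^{1,p}}\le C\|w\|_{W^{1,p}}\|\tilde{\mathbf u}\|_{W^{2,p}}\le C D_0\,(\|\mathbf v\|_{W^{2,p}}+\|w\|_{W^{1,p}})$, which is of the form $E(D_0)(\|\mathbf v\|_{W^{2,p}}+\|w\|_{W^{1,p}})$ after setting $E(D_0)\sim D_0$. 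Then I would treat $G(\mathbf v,w)$ in the $L^p$ norm term by term: the cubic-looking terms such as $w\,\mathbf v\cdot\nabla\mathbf v$ and $w\,(\mathbf v+\tilde{\mathbf u})\cdot\nabla U_0$ are estimated by $\|w\|_{W^{1,p}}\|\mathbf v\|_{W^{2,p}}^2$ and the like, contributing the cubic term; the genuinely quadratic ones such as $\mathbf v\cdot\nabla\mathbf v$, $(\mathbf v+\tilde{\mathbf u}+U_0)\cdot\nabla\tilde{\mathbf u}$ modulo the $U_0$-part, and $(w+1)\mathbf v\cdot\nabla\mathbf v$ contribute the quadratic term; the terms linear in $\tilde{\mathbf u}$ (e.g. $\mu\Delta\tilde{\mathbf u}$, $(\mu+\nu)\nabla\operatorname{div}\tilde{\mathbf u}$, $\tilde{\mathbf u}\cdot\nabla U_0$, and $U_0\cdot\nabla\tilde{\mathbf u}$) are bounded by $\|\tilde{\mathbf u}\|_{W^{2,p}}$; and any mixed term carrying a factor $\tilde{\mathbf u}$ times a first-order quantity is bounded by $D_0(\|\mathbf v\|_{W^{2,p}}+\|w\|_{W^{1,p}})$, i.e. absorbed into the $E(D_0)$ term.

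The one term that requires a separate remark is the last summand of $G$, namely $\gamma\big[(w+1)^{\gamma-1}-1\big]\nabla w$. Here I would use that $w\in W^{1,p}(\Omega)\hookrightarrow C(\bar\Omega)$ with $\|w\|_{L^\infty}\le C\|w\|_{W^{1,p}}$, so as soon as $\|w\|_{W^{1,p}}$ is small the quantity $w+1$ stays in a compact subinterval of $(0,\infty)$ on which $s\mapsto s^{\gamma-1}$ is smooth; a Taylor expansion gives $|(w+1)^{\gamma-1}-1|\le C|w|$ pointwise, whence $\|[(w+1)^{\gamma-1}-1]\nabla w\|_{L^p}\le C\|w\|_{L^\infty}\|\nabla w\|_{L^p}\le C\|w\|_{W^{1,p}}^2$, a quadratic contribution. (One should also note $(w+1)^\gamma$ and its relatives are bounded; a cleaner route is to write $(w+1)^{\gamma-1}-1=w\,g(w)$ with $g$ smooth and bounded on the relevant range, then $\|w g(w)\nabla w\|_{L^p}\le C\|w\|_{W^{1,p}}^2$ by the algebra property once one checks $g(w)\in W^{1,p}$ via the chain rule, using $\|\nabla g(w)\|_{L^p}\le \|g'(w)\|_{L^\infty}\|\nabla w\|_{L^p}$.) The main obstacle, such as it is, is purely bookkeeping: one must verify that \emph{every} term of $G$ falls into one of the four buckets — cubic in $(\|\mathbf v\|_{W^{2,p}}+\|w\|_{W^{1,p}})$, quadratic in it, linear in it with a coefficient $O(D_0)$, or bounded by $\|\tilde{\mathbf u}\|_{W^{2,p}}$ — and in particular that no term is merely \emph{linear} in $(\mathbf v,w)$ with an $O(1)$ coefficient, which would break the fixed-point scheme; the structure of $G$ (every monomial carries either two perturbation factors, or one perturbation factor together with a $\tilde{\mathbf u}$ factor, or is purely a $\tilde{\mathbf u}$ term) is exactly what guarantees this, and the shear flow $U_0$ being an exact solution of the background system is what removes the would-be $O(1)$-linear terms.
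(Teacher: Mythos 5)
Your proposal is correct and follows essentially the same route as the paper: a term-by-term check of $F$ and $G$ using the embedding $W^{1,p}(\Omega)\hookrightarrow L^{\infty}(\Omega)$ (equivalently, the algebra property of $W^{1,p}$ for $p>2$), with the paper only exhibiting the representative terms $w\operatorname{div}\mathbf v$ and $w\,\mathbf v\cdot\nabla\mathbf v$ and declaring the rest analogous. Your treatment of the pressure term $\gamma[(w+1)^{\gamma-1}-1]\nabla w$ via $\|w\|_{L^\infty}\le C\|w\|_{W^{1,p}}$ and a Taylor expansion is a worthwhile detail that the paper omits.
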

\begin{proof}
The estimates is almost obviously by the embedding $W^{1,p}(\Omega)\hookrightarrow L^{\infty}(\Omega)$, one can easily check it term by term. In fact, the term $w\divg {\bf v}$ in $F({\bf v},w)$ is estimated as
  \begin{equation}
  \begin{aligned}
  \|w\divg {\bf v}\|_{1;p}&\leq \|w\|_{\infty}\|\nabla{\bf v}\|_{p}+\|\nabla w\|_{p}\|\nabla {\bf v}\|_{\infty}+\|w\|_{\infty}\|\nabla^2{\bf v}\|_{p}\\
  &\leq C(\|{\bf  v}\|_{W^{2,p}}+\|w\|_{W^{1,p}})^2.
    \end{aligned}
  \end{equation}
  The most subtle term in $G({\bf v},w)$ is controlled by
  \begin{equation}
  \begin{aligned}
    \|w{\bf v}\cdot\nabla{\bf v}\|_{p}&\leq\|w\|_{\infty}\|{\bf v}\|_{p}\|\nabla{\bf v}\|_{\infty}\\
    &\leq C\|{\bf  v}\|_{2;p}^2\|w\|_{1;p}\\
    &\leq C(\|{\bf  v}\|_{W^{2,p}}+\|w\|_{W^{1,p}})^3,
    \end{aligned}
  \end{equation}
  hence, the results follows.
\end{proof}
We want to prove the existence of solution by fixed point theorem. However, a  direct application of fixed point arguments may fail due to the term ${\bf v}\cdot \nabla w$ on the left hand side of (\ref{Lin-Homo-sys}) which would cause the loss of regularity on the right hand side of the system. Fortunately, we can introduce a good enough transformation such that the problematic term disappear in the new coordinates. Indeed, consider the following transformation $\psi:(z_1,z_2)\mapsto(x_1,x_2)$ satisfy
\begin{equation}\label{Chge-Varb}
\begin{cases}
x_1=\psi^{1} (z_1,z_2)=z_1,\\
x_2=\psi^{2}(z_1,z_2)=z_2+\int_{0}^{z_1}\tilde v(t,\psi^2(t;z_2))dt,\\
\end{cases}
\end{equation}
where $\tilde v = \frac{({\bf v +\tilde{\bf u}})^{(2)}}{\bar U + ({\bf v +\tilde {\bf u}})^{(1)}}$. Thanks to the smallness assumption, the transformation $\psi:\tilde\Omega\rightarrow\Omega$ is a diffeomorphism. Furthermore, if we denote
\begin{equation}\label{jacb-of-trnsf}
{\bf E}=
\left(
  \begin{array}{ccc}
    0 & 0 \\
    \tilde v(\psi(z)) &\partial_{z_2}\int_{0}^{z_1} \tilde v(\psi(t,z_2))dt \\
  \end{array}
\right),
\end{equation}
with $\nabla\psi={\bf Id}+{\bf E}$, we have the following lemma.
\begin{lemma}
Suppose that $\psi$ is defined in (\ref{Chge-Varb}),\,then $\psi$ is a diffeomorphism such that $\psi(\tilde\Omega)=\Omega$, moreover
\begin{equation}\label{small-jaco}
\|{\bf E}\|_{W^{1,p}(\tilde\Omega)}\leq E,
\end{equation}
here $E=E(\|{\bf v}\|_{2,p},\|\tilde{\bf u}\|_{2,p})$ can be arbitrary small provided $\|{\bf v}\|_{2,p},\|\tilde{\bf u}\|_{2,p}$ small enough.
\end{lemma}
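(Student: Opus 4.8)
The plan is to recognize $\psi^{2}(\cdot\,;z_{2})$ as the flow map of the $z_{1}$‑dependent, one‑dimensional vector field $\tilde v$, and to read off the diffeomorphism property and the bound \eqref{small-jaco} from classical ODE theory combined with the two Sobolev facts available for $p>2$: the algebra embedding $W^{1,p}(\Omega)\hookrightarrow L^{\infty}(\Omega)$ and the Morrey embedding $W^{2,p}(\Omega)\hookrightarrow C^{1,1-2/p}(\bar\Omega)$. First I would collect the properties of $\tilde v=\dfrac{(\mathbf v+\tilde{\mathbf u})^{(2)}}{\bar U+(\mathbf v+\tilde{\mathbf u})^{(1)}}$. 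Since $\bar U=1+x_{2}\ge 1$ and $\|(\mathbf v+\tilde{\mathbf u})^{(1)}\|_{L^{\infty}}$ is small by hypothesis, the denominator lies between $\tfrac12$ and a fixed constant; writing $1/(\bar U+(\mathbf v+\tilde{\mathbf u})^{(1)})$ and using that $W^{2,p}$ is a Banach algebra closed under reciprocals of functions bounded below, one gets $\tilde v\in W^{2,p}(\Omega)\hookrightarrow C^{1,\alpha}(\bar\Omega)$ with $\|\tilde v\|_{W^{2,p}(\Omega)}+\|\tilde v\|_{C^{1,\alpha}(\bar\Omega)}\le C(\|\mathbf v\|_{2,p}+\|\tilde{\mathbf u}\|_{2,p})=:E$, which is small because the numerator $(\mathbf v+\tilde{\mathbf u})^{(2)}$ is itself small in $W^{2,p}$. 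In particular $\tilde v$ is globally Lipschitz in $x_{2}$; and, crucially, on $\Gamma_{0}=\{x_{2}=0\}\cup\{x_{2}=1\}$ the slip conditions $\mathbf v\cdot\mathbf n=\tilde{\mathbf u}\cdot\mathbf n=0$ force $(\mathbf v+\tilde{\mathbf u})^{(2)}=0$, so $\tilde v$ vanishes on the two horizontal walls, i.e. the vector field is tangent to $\partial\Omega$.

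Next, for each fixed $z_{2}\in[0,1]$ I would solve the integral equation in \eqref{Chge-Varb}, that is the initial value problem $\frac{d}{dz_{1}}\psi^{2}(z_{1};z_{2})=\tilde v\bigl(z_{1},\psi^{2}(z_{1};z_{2})\bigr)$, $\psi^{2}(0;z_{2})=z_{2}$, uniquely solvable by Picard–Lindel\"of. Because $\tilde v\equiv0$ on $\{x_{2}=0\}\cup\{x_{2}=1\}$, uniqueness pins the characteristics through those two lines to stay on them, and the no‑crossing property then confines every characteristic issuing from $(0,1)$ to $(0,1)$ for all $z_{1}\in[0,1]$; hence $\psi$ is well defined on $\bar\Omega$, with $\psi^{1}(z)=z_{1}$ and $\psi(\bar\Omega)\subset\bar\Omega$, and one may take $\tilde\Omega=\Omega$. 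Since $\tilde v\in C^{1}$, standard $C^{1}$‑dependence on initial data gives $\psi\in C^{1}$, and differentiating the ODE in $z_{2}$ yields the variational identity
\[
\partial_{z_{2}}\psi^{2}(z_{1};z_{2})=\exp\!\Bigl(\textstyle\int_{0}^{z_{1}}(\partial_{x_{2}}\tilde v)\bigl(t,\psi^{2}(t;z_{2})\bigr)\,dt\Bigr)>0,
\]
which is precisely $1$ plus the $(2,2)$‑entry of $\mathbf E$ in \eqref{jacb-of-trnsf} and which gives $\det\nabla\psi=\partial_{z_{2}}\psi^{2}>0$. Injectivity is then immediate ($z_{1}=x_{1}$ fixes $z_{1}$, and $z_{2}\mapsto\psi^{2}(z_{1};z_{2})$ is strictly increasing), surjectivity onto $(0,1)^{2}$ follows from the invariance of the two horizontal walls together with the intermediate value theorem, and the inverse function theorem upgrades this $C^{1}$ bijection with nonvanishing Jacobian to a diffeomorphism; thus $\psi(\tilde\Omega)=\Omega$.

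For the quantitative bound \eqref{small-jaco} I would estimate the two nonzero entries of $\mathbf E$ separately. The $(2,1)$‑entry $\tilde v\circ\psi$ is handled by the chain rule $\nabla_{z}(\tilde v\circ\psi)=\bigl((\nabla\tilde v)\circ\psi\bigr)\nabla\psi$ together with the change of variables $y=\psi(z)$ (whose Jacobian is bounded above and below), giving $\|\tilde v\circ\psi\|_{W^{1,p}(\tilde\Omega)}\le C\|\tilde v\|_{W^{1,p}(\Omega)}\le CE$. For the $(2,2)$‑entry, write it as $\exp(a)-1$ with $a(z)=\int_{0}^{z_{1}}(\partial_{x_{2}}\tilde v)\bigl(t,\psi^{2}(t;z_{2})\bigr)\,dt$; then $\|a\|_{L^{\infty}}\le\|\partial_{x_{2}}\tilde v\|_{L^{\infty}}\le CE$ gives $\|\exp(a)-1\|_{L^{\infty}(\tilde\Omega)}\le CE$, while $\nabla_{z}(\exp(a)-1)=\exp(a)\,\nabla_{z}a$ is controlled by differentiating under the integral sign: $\partial_{z_{1}}a$ is $(\partial_{x_{2}}\tilde v)(z_{1},\psi^{2})$, and $\partial_{z_{2}}a=\int_{0}^{z_{1}}(\partial^{2}_{x_{2}}\tilde v)(t,\psi^{2})\,\partial_{z_{2}}\psi^{2}\,dt$, and applying Minkowski's integral inequality together with the slicewise change of variables $y=\psi^{2}(t;z_{2})$ reduces both to $\|\partial_{x_{2}}\tilde v\|_{L^{p}(\Omega)}$ and $\|\partial^{2}_{x_{2}}\tilde v\|_{L^{p}(\Omega)}$, each $\le\|\tilde v\|_{W^{2,p}(\Omega)}\le CE$. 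Collecting these bounds gives $\|\mathbf E\|_{W^{1,p}(\tilde\Omega)}\le CE=:E(\|\mathbf v\|_{2,p},\|\tilde{\mathbf u}\|_{2,p})$, which is arbitrarily small for small data, as claimed.

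I expect the main obstacle to be the $W^{1,p}$ estimate of the $(2,2)$‑entry of $\mathbf E$: it is what forces one to differentiate the flow map with respect to its initial condition, to make sense of the composition $\partial^{2}_{x_{2}}\tilde v\circ\psi$ in $L^{p}$ — which is exactly why one must first upgrade $\tilde v$ to $W^{2,p}$ rather than merely $W^{1,p}$ — and to propagate the smallness along the characteristics via the exponential formula. All of this rests on the slip boundary condition forcing $\tilde v$ to vanish on the horizontal walls; without it the characteristic flow would not preserve $\Omega$ and the map $\psi$ would not even be defined as stated.
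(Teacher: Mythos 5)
Your proposal is correct and reaches the same bound by the same basic reduction: show $\tilde v$ is small in $W^{2,p}$, then estimate the two nonzero entries of $\mathbf E$, with the delicate point being $\partial_{z_2}E_{22}$, where the composition $\nabla_x^2\tilde v\circ\psi$ must be returned to $L^p(\Omega)$ by a change of variables along the flow. The differences are in execution, and both are worth noting. First, where the paper treats $\partial_{z_2}\psi^2=1+E_{22}$ implicitly --- deriving inequalities of the form $\|E_{22}\|_p\le C\|\nabla\tilde v\|_\infty\|1+E_{22}\|_p$ and $\|\partial_{z_2}E_{22}\|_p\le C(\|\tilde v\|_{2,p}+\|\nabla\tilde v\|_\infty\|\partial_{z_2}E_{22}\|_p)$ and absorbing the small terms --- you solve the variational equation in closed form, $\partial_{z_2}\psi^2=\exp\bigl(\int_0^{z_1}(\partial_{x_2}\tilde v)(t,\psi^2)\,dt\bigr)$, which yields positivity of the Jacobian and the smallness of $E_{22}=\exp(a)-1$ in one stroke; this is cleaner and makes the lower bound on $\det\nabla\psi$ (needed anyway for your change of variables) explicit rather than a consequence of smallness. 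Second, you actually prove the diffeomorphism statement --- via the tangency $\tilde v=0$ on $\Gamma_0$ forced by the slip conditions, invariance of the two walls, and strict monotonicity of $z_2\mapsto\psi^2(z_1;z_2)$ --- whereas the paper declares this part ``standard'' and cites \cite{P2010}; your observation that the whole construction hinges on $(\mathbf v+\tilde{\mathbf u})^{(2)}$ vanishing on the horizontal walls is exactly the point that the paper leaves implicit. The only caveat is that the lemma as stated is for an arbitrary small $\mathbf v\in W^{2,p}$, so when you invoke $\mathbf v\cdot\mathbf n=0$ on $\Gamma_0$ you are using the boundary conditions that $\mathbf v$ carries in context (as an iterate of the scheme); that assumption is genuinely needed for $\psi(\tilde\Omega)=\Omega$ and should be recorded as a hypothesis.
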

\begin{proof}
Since $\bar U$ is strictly positive and $\|{\bf v}\|_{2,p},\|\tilde{\bf u}\|_{2,p}$ is small, there exists a positive constant $c$, such that
\begin{equation*}
  \bar U +({\bf v}+\tilde{\bf u})^{(1)}\geq c>0,
\end{equation*}
it follows that $\tilde v\in W^{2,p}$ and
\begin{equation}\label{small-tilde}
  \|\tilde v\|_{2,p}\leq E(\|{\bf v}\|_{2,p},\|\tilde{\bf u}\|_{2,p}).
\end{equation}
At this stage,the first part is standard, one can refer to \cite{P2010}, and the estimate (\ref{small-jaco}) for $E_{21}$ is immediately. On the other hand,
\begin{equation}
\begin{aligned}
  |E_{22}|^p&\leq|\int_{0}^{z_1}\nabla\tilde v(\psi(t,z_2))\cdot\partial_{z_2}\psi(t,z_2)dt|^p\\
  &\leq |z_1|^{p-1}\int_{0}^{z_1}|\nabla\tilde v(\psi(t,z_2))\cdot\partial_{z_2}\psi(t,z_2)|^pdt\\
  &\leq C\|\nabla\tilde v\|_{\infty}^p\int_{0}^{z_1}|1+E_{22}|^pdt,
  \end{aligned}
\end{equation}
Integrating it over $\Omega$ gives
\begin{equation}
  \|E_{22}\|_{p}\leq C\|\nabla\tilde v\|_{\infty}\|1+E_{22}\|_{p}.
\end{equation}
Differentiate $E_{22}$ with respect to $z_1$ we have
\begin{equation*}
  \begin{aligned}
    \partial_{z_1}E_{22}=\nabla\tilde v(\psi(z_1,z_2))\cdot\partial_{z_2}\psi(z_1,z_2),
  \end{aligned}
\end{equation*}
which gives
\begin{equation}
  \|\partial_{z_1}E_{22}\|_{p}\leq C\|\nabla\tilde v\|_{\infty}\|1+E_{22}\|_{p}\leq E,
\end{equation}
similarly, differentiate $E_{22}$ with respect to $z_2$ we have
\begin{equation*}
  \begin{aligned}
  \partial_{z_2}E_{22}&=\partial_{z_2}\int_{0}^{z_1} \partial_{x_i}\tilde v(\psi(t,z_2))\partial_{z_2}\psi^i(t,z_2)dt\\
&=\int_{0}^{z_1} \partial_{x_i,x_j}\tilde v(\psi(t,z_2))\partial_{z_2}\psi^j(t,z_2)\partial_{z_2}\psi^i(t,z_2)dt\\
&+\int_{0}^{z_1} \partial_{x_i}\tilde v(\psi(t,z_2))\partial^2_{z_2}\psi^i(t,z_2)dt\\
&:=I_1+I_2,
  \end{aligned}
\end{equation*}
it follows that
\begin{equation*}
  \begin{aligned}
    |I_1|^p&=|\int_{0}^{z_1} \partial_{x_i,x_j}\tilde v(\psi(t,z_2))\partial_{z_2}\psi^j(t,z_2)\partial_{z_2}\psi^i(t,z_2)dt|^p\\
    &\leq \|\partial_{z_2}\psi\|^{2p}_{\infty}|z_1|^{p-1}\int_{0}^{z_1}|\nabla_x^2\tilde v|^pdt\\
    &\leq C\int_{0}^{z_1}|\nabla_x^2\tilde v|^pdt
  \end{aligned}
\end{equation*}
and
\begin{equation*}
  \begin{aligned}
    |I_2|^p&=|\int_{0}^{z_1} \partial_{x_i}\tilde v(\psi(t,z_2))\partial^2_{z_2}\psi^i(t,z_2)dt|^p\\
   &\leq C\|\nabla\tilde v\|_{\infty}\int_{0}^{z_1}|\partial_{z_2}E_{22}|^{p}
  \end{aligned}
\end{equation*}
which implies
\begin{equation}
  \|\partial_{z_2}E_{22}\|_{p}\leq C(\|\tilde v\|_{2,p}+\|\nabla\tilde v\|_{\infty}\|\partial_{z_2}E_{22}\|_{p}),
\end{equation}
finally, combine all those inequality together the estimate (\ref{small-jaco}) for $E_{22}$ is immediate and the proof thus is completed.
\end{proof}
\begin{remark}
Since the transformation $\psi$ is a diffeomorphism, there exists a inverse transform $\phi$ such that $\psi\circ\phi={\bf Id}$, and
\begin{equation}
  \nabla\phi=\nabla\psi^{-1}={\bf Id}+\tilde{\bf E}.
\end{equation}
Denote $J=|\nabla \psi|$, then $\nabla\psi^{-1}$ can be explicitly computed as
\begin{equation}\label{jacb-of-inv-trnsf}
\nabla\psi^{-1}=\frac{1}{J}
\left(
  \begin{array}{ccc}
    1+\partial_{z_2}\int_{0}^{z_1} \tilde v(\psi(t,z_2))dt & 0 \\
    -\tilde v(\psi(z)) & 1\\
  \end{array}
\right),
\end{equation}
hence, the estimates (\ref{small-jaco}) also holds for $\tilde{\bf E}$. We also denote the transform $\psi$ as $\psi_{{\bf v}+\tilde{\bf u}}$ to emphasize that $\psi_{{\bf v}+\tilde{\bf u}}$ is induced by $\tilde v$ through (\ref{Chge-Varb}).
\end{remark}
Observe that the most important property of the transform $\psi$ is
\begin{equation}
\partial_{z_1}=\frac{\partial{x_1}}{\partial{z_1}}\partial_{x_1} +\frac{\partial{x_2}}{\partial{z_1}} \partial_{x_2}=\partial_{x_1} +\tilde v \partial_{x_2},
\end{equation}
where $\tilde v = \frac{({\bf v +\tilde{\bf u}})^{(2)}}{\bar U + ({\bf v +\tilde {\bf u}})^{(1)}}$.
Hence we can change variable and rewrite the system (\ref{Lin-Homo-sys})\,in coordinates z, which leads to
\begin{equation}\label{Z-cordin-sys}
\begin{aligned}
&((\bar U +({\bf v +\tilde{\bf u}})^{(1)})\circ\psi_{{\bf v}+\tilde{\bf u}})\partial_{z_1}w+\operatorname{div}_{z} {\bf v} =\tilde F({\bf v},w) \quad\text { in } \tilde\Omega, \\
&(\bar U\circ\psi_{{\bf v}+\tilde{\bf u}}) \partial_{z_1} {\bf v}+{\bf{v}}\cdot \nabla_{z} U_0+\gamma \nabla_{z} w-\mu \Delta_{z} {\bf v}-(\mu+\nu) \nabla \operatorname{div}_{z}{\bf v} =\tilde G({\bf v},w) \quad\text { in } \tilde\Omega, \\
&{\bf v} =0 \quad\text { on } \tilde\Gamma_{\text {in }} \cup \tilde\Gamma_{\text {out }}, \\
& w =\rho_{\text {in }}-1 \quad\text { on } \tilde\Gamma_{\text {in }}, \\
& {\bf v} \cdot \mathbf{n} =0 \quad\text { on } \tilde\Gamma_{0},\\
&2\mu\mathbf{n}\cdot D_z({\bf v})\cdot \tau +\alpha{\bf v}\cdot \tau =\tilde B \quad\text { on } \tilde\Gamma_{0}.
\end{aligned}
\end{equation}
Here $\tilde B=B-2\mu {\bf n}\cdot R({\bf v},D)\cdot \tau $,\, and the function 
$R(\cdot,\cdot )$ with the first variable denotes a function and the second is a 
differential operator representing the differences of  the differential operator 
acting  on the function in x-coordinates and z-coordinates. For instance,\,$R({\bf 
v},D)=D_x {\bf v} -D_z {\bf v}$, and we also have
\begin{equation}\label{souc-tilde}
\begin{aligned}
\tilde F({\bf v}, w)=&F({\bf v},w)-R({\bf v},\operatorname{div})\\
\tilde G({\bf v}, w)=&G({\bf v},w)-\bar U R({\bf v},\partial_1)-{\bf v}\cdot R(U_0,\nabla)\\
-&\gamma R(w,\nabla)+\mu R({\bf v},\Delta)+(\mu+\nu) R({\bf v},\nabla\operatorname{div}).
\end{aligned}
\end{equation}

\begin{lemma}
Let $\tilde F$ and $\tilde G$ are given by (\ref{souc-tilde}).Then we have
\begin{equation}\label{est-soc-z}
\begin{aligned}
&\|\tilde F({\bf v},w)\|_{W^{1,p}}+\|\tilde G({\bf v},w)\|_{L^p}\\
&\leq C[(\|{\bf  v}\|_{W^{2,p}}+\|w\|_{W^{1,p}})^3+(\|{\bf  v}\|_{W^{2,p}}+\|w\|_{W^{1,p}})^2]\\
&+E(D_0)(\|{\bf  v}\|_{W^{2,p}}+\|w\|_{W^{1,p}})+\|\tilde{\bf u}\|_{W^{2,p}}
\end{aligned}
\end{equation}
\end{lemma}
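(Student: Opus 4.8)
The plan is to bound $\|\tilde F\|_{W^{1,p}}+\|\tilde G\|_{L^p}$ by first invoking the already-established estimate \eqref{est-sou-x} from Lemma \ref{FG-est} for the "untransformed" parts $F$ and $G$, and then separately controlling each of the remainder terms built out of the operator $R(\cdot,\cdot)$ that appear in \eqref{souc-tilde}. Since $\tilde F = F - R({\bf v},\operatorname{div})$ and $\tilde G = G - \bar U R({\bf v},\partial_1) - {\bf v}\cdot R(U_0,\nabla) - \gamma R(w,\nabla) + \mu R({\bf v},\Delta) + (\mu+\nu) R({\bf v},\nabla\operatorname{div})$, by the triangle inequality it suffices to estimate $\|R({\bf v},\operatorname{div})\|_{W^{1,p}}$ and the $L^p$-norms of the remaining $R$-terms, and then add these to the bound already given by \eqref{est-sou-x}.

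The key observation is that $R(f,D)$ measures precisely the discrepancy between a differential operator acting in the $x$-coordinates versus the $z$-coordinates; by the chain rule and the identity $\nabla_z = (\nabla\psi)^{\mathrm T}\nabla_x = ({\bf Id}+{\bf E})^{\mathrm T}\nabla_x$ (equivalently, working with $\nabla\psi^{-1} = {\bf Id}+\tilde{\bf E}$ from the Remark), every term $R(f,D)$ is a sum of products in which at least one factor is an entry of ${\bf E}$ (or $\tilde{\bf E}$), possibly together with one derivative of such an entry for the second-order operators $\Delta$ and $\nabla\operatorname{div}$. Concretely, for a first-order operator one gets schematically $R(f,D) \sim \tilde{\bf E}\cdot\nabla_z f$, and for the Laplacian $R({\bf v},\Delta)\sim \tilde{\bf E}\cdot\nabla_z^2{\bf v} + (\nabla_z\tilde{\bf E})\cdot\nabla_z{\bf v}$. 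Using the algebra/embedding $W^{1,p}(\tilde\Omega)\hookrightarrow L^\infty(\tilde\Omega)$ (valid since $p>2$), each such product is controlled by $\|{\bf E}\|_{W^{1,p}}$ (or $\|\tilde{\bf E}\|_{W^{1,p}}$) times the appropriate Sobolev norm of ${\bf v}$, $w$, or $U_0$; the $U_0$-factor is just a fixed smooth shear profile and contributes a constant. Then Lemma 2.5, i.e. \eqref{small-jaco}, gives $\|{\bf E}\|_{W^{1,p}(\tilde\Omega)}\le E$ with $E = E(\|{\bf v}\|_{2,p},\|\tilde{\bf u}\|_{2,p})$ small, and the same bound holds for $\tilde{\bf E}$ by the Remark. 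Since $\|\tilde{\bf u}\|_{2,p}\le C D_0$ by the construction in Section 2 and $\|{\bf v}\|_{2,p}$ is bounded by the a priori smallness of the iterate, the factor $E$ can be absorbed either into the $E(D_0)(\|{\bf v}\|_{W^{2,p}}+\|w\|_{W^{1,p}})$ term or, for the quadratic contributions, into the cubic term on the right-hand side of \eqref{est-soc-z}.

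The one point requiring a little care — and the main obstacle — is the $W^{1,p}$ estimate of $R({\bf v},\operatorname{div})$ inside $\tilde F$, since there we must differentiate once more, producing a term with $\nabla_z^2{\bf v}$ multiplied by $\tilde{\bf E}$ (harmless, since ${\bf v}\in W^{2,p}$ and $\tilde{\bf E}\in L^\infty$) and a term with $\nabla_z{\bf v}$ multiplied by $\nabla_z\tilde{\bf E}$. The latter is the delicate one: it is a product of two $W^{1,p}$ functions differentiated, so one must use that $W^{1,p}$ is a Banach algebra for $p>2$ (equivalently, Hölder with $W^{1,p}\hookrightarrow L^\infty$) to keep it in $L^p$, landing it in the quadratic bracket $(\|{\bf v}\|_{W^{2,p}}+\|w\|_{W^{1,p}})^2$ with a small prefactor coming from $\|\tilde{\bf E}\|_{W^{1,p}}\le E$. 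Once this term is handled, all remaining contributions are strictly easier, and collecting everything yields exactly the claimed bound \eqref{est-soc-z}, which has the same structural form as \eqref{est-sou-x}. Hence the result follows.
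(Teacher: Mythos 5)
Your proposal is correct and follows essentially the same route as the paper: the paper's proof consists precisely of writing out the chain-rule identities $\partial_{z_i}v^l=\partial_{x_i}v^l+E_{ik}\partial_{x_k}v^l$ and the corresponding second-order formula (so that every $R(\cdot,D)$ term carries at least one factor of ${\bf E}$ or $\nabla{\bf E}$), and then invoking the smallness bound \eqref{small-jaco} together with the estimate \eqref{est-sou-x} for $F$ and $G$. Your additional care with the product $\nabla\tilde{\bf E}\cdot\nabla_z{\bf v}$ in $L^p$ via $W^{1,p}\hookrightarrow L^\infty$ for $p>2$ is exactly the step the paper leaves implicit, so nothing further is needed.
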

\begin{proof}
A direct computation shows that under the change of variable $\psi:(z_1,z_2)\mapsto(x_1,x_2)$,\, there holds
\begin{equation}\label{deriv-in-x-z}
\begin{aligned}
\partial_{z_{i}}v^{l}&=\partial_{x_{i}}v^{l} + E_{ik}\partial_{x_{k}}v^{l},\\
\partial^{2}_{z_{i}z_{j}}v^{l}&=\partial^{2}_{x_{i}x_{j}}v^{l}+E_{ik}\partial^{2}_{x_{j}x_{k}}v^{l} +\partial_{x_{j}}E_{ik}\partial_{x_{k}}v^{l}\\
&+E_{jk}\partial^{2}_{x_{i}x_{k}}v^{l}+E_{jm}E_{ik}\partial^{2}_{x_{k}x_{m}}v^{l}+E_{jm}\partial_{x_{m}}E_{ik}\partial_{x_{k}}v^{l}
\end{aligned}
\end{equation}
where $1\leq i,j,l\leq 2$ are integers, and the repeated subscribe always sum over it's index set. The above estimates combine with (\ref{est-sou-x}) and Lemma \ref{FG-est} the results is immediate.
\end{proof}
\section{ a priori estimates}
In this section, we construct a sequence that will converge to a solution of the nonlinear system (\ref{Z-cordin-sys}).The solution sequence is defined as follows:
\begin{equation}\label{Itrn-sys}
\begin{aligned}
 &((\bar U +({{\bf v}^{n} +\tilde{\bf u}})^{(1)})\circ\psi_{{\bf v}^n+\tilde{\bf u}})\partial_{z_1}w^{n+1}+\operatorname{div}_{z} {\bf v}^{n+1} =\tilde F({\bf v}^{n},w^n) \quad\text { in } \tilde\Omega,\\
&(\bar U\circ\psi_{{\bf v}^n+\tilde{\bf u}}) \partial_{z_1} {\bf v}^{n+1}+{\bf{v}}^{n+1}\cdot \nabla_{z} U_0+\gamma \nabla_{z} w^{n+1}\\
&-\mu \Delta_{z} {\bf v}^{n+1}-(\mu+\nu) \nabla \operatorname{div}_{z}{\bf v}^{n+1} =\tilde G({\bf v}^n,w^n)  \quad\text { in } \tilde\Omega,\\
& {\bf v}^{n+1} =0\quad\text { on } \tilde\Gamma_{\text {in }} \cup \tilde\Gamma_{\text {out }},\\
& w^{n+1} =\rho_{in}-1 \quad\text { on } \tilde\Gamma_{in}\\
& {\bf v}^{n+1} \cdot \mathbf{n} =0\quad\text { on } \tilde\Gamma_{0},\\
&2\mu\mathbf{n}\cdot D_z({\bf v}^{n+1})\cdot \tau +\alpha{\bf v}^{n+1}\cdot \tau =\tilde B({\bf v}^n)\quad\text { on } \tilde\Gamma_{0},
\end{aligned}
\end{equation}
where $\tilde B({\bf v}^n)=B-2\mu {\bf n}\cdot R({\bf v}^n,D)\cdot \tau $.
\\ To show the existence of the solutions to (\ref{Itrn-sys}), we firstly deal with the following linear system:
\begin{equation}\label{Lin-sys}
\begin{aligned}
&((\bar U +\bar{\bf v}^{(1)})\circ\psi_{\bar{\bf v}})\partial_{z_1} w+\operatorname{div} {\bf v} =f \quad\text { in } \tilde\Omega, \\
&(\bar U\circ\psi_{\bar{\bf v}}) \partial_{z_1} {\bf v}+{\bf{v}}\cdot \nabla U_0+\gamma \nabla w-\mu \Delta {\bf v}-(\mu+\nu) \nabla \operatorname{div}{\bf v} = g  \quad\text { in } \tilde\Omega,\\
& {\bf v} =0  \quad\text { on } \tilde\Gamma_{\text {in }} \cup \tilde\Gamma_{\text {out }},\\
& w =w_{in} \quad\text { on } \tilde\Gamma_{\text {in }},\\
& {\bf v} \cdot \mathbf{n} =0 \quad\text { on } \tilde\Gamma_{\text {0 }},\\
&2\mu\mathbf{n}\cdot D_z({\bf v})\cdot \tau +\alpha{\bf v}\cdot \tau =\tilde B \quad\text { on } \tilde\Gamma_{0},
\end{aligned}
\end{equation}
where $f,g\in L^2(\Omega)$,\,$w_{in}\in L^2(\Gamma_{in}),\tilde B\in L^2(\Gamma_0)$ are given functions. Without the risk of confusing, we
readily remove the superscript of the domains.
\subsection{Solution of mass equation}
\ \par One of the most important advantage of changing variable is that we can define a  operator $S:v\mapsto w$ as the solution of equation
\begin{equation}\label{tran-eq}
\begin{cases}
((\bar U +\bar{\bf v}^{(1)})\circ\psi_{\bar{\bf v}})\partial_{z_1}w=v \quad\quad \text{in}\,\Omega,\\
w= w_{in}(z_2)\quad\quad on\,\Gamma_{in}.
\end{cases}
\end{equation}
Indeed,for a continuous function $v$,\, setting
\begin{equation}\label{def-of-S}
S(v)(z):= w_{in}(z_2)+\int_{0}^{z_1}\frac{v}{(\bar U +\bar{\bf v}^{(1)})\circ\psi_{\bar{\bf v}}}(t,z_2)dt,
\end{equation}
then it is easy to verfy that $S(v)$ satisfies (\ref{tran-eq}). Moreover,we have the following estimates.
\begin{lemma}
Let $S$ be defined in (\ref{def-of-S}), $w_{in}\in L^2(\Gamma_{in})$ is given then
\begin{equation}\label{estm-of-dsty}
\|S(v)\|_{L^{\infty}(L^2)(\Omega)}\leq C(| w_{in}|_{L^2(\Gamma_{in})}+\|v\|_{L^2(\Omega)})
\end{equation}
\end{lemma}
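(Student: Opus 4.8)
The plan is to estimate the explicit formula \eqref{def-of-S} directly, treating $z_1$ as an "evolution" variable and the slices $\{z_1 = \text{const}\}$ as the space over which we take $L^2$ norms in $z_2$. First I would fix $z_1 \in (0,1)$ and write
\begin{equation*}
S(v)(z_1, z_2) = w_{in}(z_2) + \int_0^{z_1} \frac{v(t, z_2)}{(\bar U + \bar{\bf v}^{(1)})\circ\psi_{\bar{\bf v}}(t, z_2)}\, dt.
\end{equation*}
Since $\bar U = 1 + x_2$ is bounded below by $1$ and $\|\bar{\bf v}\|_{2,p}$ is small, the composition $(\bar U + \bar{\bf v}^{(1)})\circ\psi_{\bar{\bf v}}$ is bounded below by a fixed constant $c > 0$ on $\tilde\Omega$ (this is exactly the lower bound already invoked in the proof of the diffeomorphism lemma), so the integrand is pointwise bounded by $c^{-1}|v(t,z_2)|$.

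Next I would take the $L^2$ norm in $z_2$ over $(0,1)$ of both sides, using the triangle inequality in $L^2_{z_2}$ together with Minkowski's integral inequality to pull the $L^2_{z_2}$ norm inside the $t$-integral:
\begin{equation*}
\|S(v)(z_1, \cdot)\|_{L^2_{z_2}} \le |w_{in}|_{L^2(\Gamma_{in})} + c^{-1}\int_0^{z_1} \|v(t, \cdot)\|_{L^2_{z_2}}\, dt.
\end{equation*}
Then I would bound the integral crudely by Cauchy--Schwarz on $(0, z_1) \subset (0,1)$, giving $\int_0^{z_1}\|v(t,\cdot)\|_{L^2_{z_2}}\,dt \le \bigl(\int_0^1 \|v(t,\cdot)\|_{L^2_{z_2}}^2\,dt\bigr)^{1/2} = \|v\|_{L^2(\Omega)}$. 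Taking the supremum over $z_1 \in (0,1)$ yields $\|S(v)\|_{L^\infty(L^2)(\Omega)} \le |w_{in}|_{L^2(\Gamma_{in})} + c^{-1}\|v\|_{L^2(\Omega)}$, which is \eqref{estm-of-dsty} with $C = \max\{1, c^{-1}\}$.

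I do not expect a serious obstacle here; this is essentially a Grönwall-free ODE-in-$z_1$ estimate made trivial by the fact that $z_1$ ranges over a bounded interval. The only point requiring a little care is making the lower bound on $(\bar U + \bar{\bf v}^{(1)})\circ\psi_{\bar{\bf v}}$ precise and uniform, but this follows immediately from the smallness of $\|\bar{\bf v}\|_{2,p}$ and the embedding $W^{2,p}(\Omega)\hookrightarrow L^\infty(\Omega)$ for $p > 2$, exactly as used in the preceding lemma. One should also note that $v$ need only be, say, continuous (or $L^2$) for the formula and the estimate to make sense; the extension to general $v \in L^2(\Omega)$ follows by density, since the map $v \mapsto S(v)$ is linear and the estimate is continuous in $v$.
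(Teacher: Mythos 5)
Your proposal is correct and follows essentially the same route as the paper: fix $z_1$, use the uniform lower bound $(\bar U+\bar{\bf v}^{(1)})\circ\psi_{\bar{\bf v}}\geq c>0$, and apply Cauchy--Schwarz in $t$ on the bounded interval $(0,z_1)$ before taking the supremum over $z_1$ (the paper does this by squaring and expanding rather than via Minkowski, but the content is identical). The density extension to $v\in L^2(\Omega)$ that you mention at the end is likewise exactly what the paper records in the remark following the lemma.
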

\begin{proof}
Given $z_1$, let $\Omega_{z_1}$  denote the $z_1$-cut of $\Omega$, then
\begin{equation}
\begin{aligned}
  |S(v)|^2_{L^2(\Omega_{z_1})}&=\int_{0}^{1}[w_{in}(z_2)+\int_{0}^{z_1}\frac{v}{(\bar U +\bar{\bf v}^{(1)})\circ\psi_{\bar{\bf v}}}(t,z_2)dt]^2dz_2\\
  &\leq C[| w_{in}|^2_{L^2(\Gamma_{in})}+\|v\|_{L^2(\Omega)}],
  \end{aligned}
\end{equation}
where we have used
\begin{equation*}
  \bar U +({\bf v}+\tilde{\bf u})^{(1)}\geq c>0,
\end{equation*}
for some positive constant $c>0$, and the estimates (\ref{estm-of-dsty}) follows.
\end{proof}
\begin{remark}
By density arguments one can easily extend $S$ to $L^2(\Omega)$ which also preserves 
the same estimates as (\ref{estm-of-dsty}).
\end{remark}
\subsection{$H^1$ estimates for linear system}
\begin{lemma}\label{H1-est}
Let $\bar{\bf v}\in W^{2,p}(\Omega)$ and $\| \bar{\bf v}\|_{2,p}$ be small enough, the viscous coefficients $\mu>\frac{1}{2\pi^2}$. Suppose that $({\bf v},w)$ be a solution to the system (\ref{Lin-sys}) with given $(f,g,\tilde B,w_{in})\in L^2(\Omega)\times V^{*}\times L^2(\Gamma_0)\times L^2({\Gamma_{in}})$, then
\begin{equation}\label{L2-estm}
\|{\bf v}\|_{H^1} + \|w\|_{L^{\infty}(L^2)}\leq C(\|f\|_{L^2}+\|g\|_{V^{*}}+\|\tilde B\|_{L^2(\Gamma_{0})}+\|w_{in}\|_{L^2({\Gamma_{in}})})
\end{equation}
where
$$V:=\{{\bf v}\in W^{1,2}(\Omega):{\bf 
v}=0\,\text{on}\,\Gamma_{in}\cup\Gamma_{out},\,{\bf v}\cdot n|_{\Gamma_0}=0\}$$
and $V^{*}$ is the dual space of V.
\end{lemma}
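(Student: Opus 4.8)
The plan is to derive the energy estimate \eqref{L2-estm} by testing the momentum equation in \eqref{Lin-sys} with $\mathbf{v}$ itself and the mass equation with $\gamma w$, then combining the two identities so that the coupling terms involving $\nabla w$ and $\operatorname{div}\mathbf{v}$ cancel. First I would multiply the second equation of \eqref{Lin-sys} by $\mathbf{v}$ and integrate over $\Omega$. The viscous terms, after integration by parts and using the boundary conditions $\mathbf{v}=0$ on $\tilde\Gamma_{in}\cup\tilde\Gamma_{out}$, $\mathbf{v}\cdot\mathbf{n}=0$ and the Navier condition $2\mu\,\mathbf{n}\cdot D_z(\mathbf{v})\cdot\tau+\alpha\mathbf{v}\cdot\tau=\tilde B$ on $\tilde\Gamma_0$, produce the positive quadratic form $2\mu\|D_z(\mathbf{v})\|_{L^2}^2+\nu\|\operatorname{div}_z\mathbf{v}\|_{L^2}^2+\alpha\|\mathbf{v}\cdot\tau\|_{L^2(\Gamma_0)}^2$ plus the boundary datum $\int_{\Gamma_0}\tilde B\,(\mathbf{v}\cdot\tau)$; the pressure term contributes $\gamma\int_\Omega \nabla_z w\cdot\mathbf{v}=-\gamma\int_\Omega w\,\operatorname{div}_z\mathbf{v}$ since $\mathbf{v}$ vanishes on the inflow/outflow parts and is tangential on $\Gamma_0$. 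Next I would multiply the first equation of \eqref{Lin-sys} by $\gamma w$ and integrate; the term $\gamma\int_\Omega w\,\operatorname{div}_z\mathbf{v}$ appears with the opposite sign and cancels the pressure contribution, while the transport term $\gamma\int_\Omega ((\bar U+\bar{\mathbf v}^{(1)})\circ\psi_{\bar{\mathbf v}})\,w\,\partial_{z_1}w = \frac\gamma2\int_\Omega a\,\partial_{z_1}(w^2)$ is integrated by parts in $z_1$, leaving a boundary term $\frac\gamma2\int_{\Gamma_{out}} a\,w^2-\frac\gamma2\int_{\Gamma_{in}} a\,w_{in}^2$ on the outflow/inflow faces plus $-\frac\gamma2\int_\Omega (\partial_{z_1}a)\,w^2$; since $a=(\bar U+\bar{\mathbf v}^{(1)})\circ\psi_{\bar{\mathbf v}}\ge c>0$ on $\Gamma_{out}$ the boundary term there has a good sign, the inflow term is absorbed into the data, and $\|\partial_{z_1}a\|_{L^\infty}$ is controlled by $\|\bar{\mathbf v}\|_{2,p}$ hence small.

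The key analytic point is that the remaining ``bad'' quadratic form on the left, namely $2\mu\|D_z(\mathbf{v})\|_{L^2}^2+\alpha\|\mathbf{v}\cdot\tau\|^2_{L^2(\Gamma_0)}$ together with the lower-order term $\int_\Omega (\mathbf{v}\cdot\nabla_z U_0)\cdot\mathbf{v}$ coming from the shear flow, must dominate $\|\mathbf{v}\|_{H^1}^2$. Here is where the hypothesis $\mu>\frac{1}{2\pi^2}$ enters: by a Korn-type inequality adapted to the boundary conditions in $V$ (vanishing on $\tilde\Gamma_{in}\cup\tilde\Gamma_{out}$, tangential on $\tilde\Gamma_0$) combined with a Poincaré inequality on the unit square with the optimal constant $\frac{1}{\pi^2}$ in the $x_1$-direction, one gets $\|\nabla_z\mathbf{v}\|_{L^2}^2\le 2\|D_z(\mathbf{v})\|_{L^2}^2$ and $\|\mathbf{v}\|_{L^2}^2\le\frac{1}{\pi^2}\|\partial_{z_1}\mathbf{v}\|_{L^2}^2$, so that $2\mu\|D_z(\mathbf{v})\|_{L^2}^2\ge\mu\|\nabla_z\mathbf{v}\|_{L^2}^2\ge(\mu-\frac{1}{2\pi^2})\|\nabla_z\mathbf v\|^2_{L^2}+\tfrac12\|\mathbf v\|^2_{L^2}$, which is coercive precisely when $\mu>\frac{1}{2\pi^2}$. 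The shear term $\int_\Omega v^{(1)}v^{(2)}\,\partial_{x_2}\bar U = \int_\Omega v^{(1)}v^{(2)}$ is bounded by $\frac12\|\mathbf v\|_{L^2}^2$ and absorbed, noting $\bar U=1+x_2$ so $\nabla U_0$ is a fixed bounded matrix.

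I would then estimate the right-hand side contributions: $\int_\Omega f\cdot(\gamma w)$ is handled using the $L^\infty(L^2)$ bound for $w=S(\cdot)$ from the previous Lemma (or directly by the transport estimate, writing $w=w_{in}+\int_0^{z_1}(\cdots)$ and using that the source of the mass equation after the cancellation is $f$ minus transported quantities), $\langle g,\mathbf{v}\rangle_{V^*,V}\le\|g\|_{V^*}\|\mathbf{v}\|_{H^1}$, and $\int_{\Gamma_0}\tilde B\,(\mathbf{v}\cdot\tau)\le\|\tilde B\|_{L^2(\Gamma_0)}\|\mathbf v\cdot\tau\|_{L^2(\Gamma_0)}\le C\|\tilde B\|_{L^2(\Gamma_0)}\|\mathbf v\|_{H^1}$ by the trace theorem. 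Applying Young's inequality with small parameter to move the $\|\mathbf v\|_{H^1}$ factors to the left, and absorbing the small terms (those multiplied by $\|\bar{\mathbf v}\|_{2,p}$, by $\|\partial_{z_1}a\|_\infty$, or the Korn/Poincaré deficiency which is a fixed positive gap), yields \eqref{L2-estm}; finally the $L^\infty(L^2)$ bound on $w$ follows by feeding the now-controlled $\operatorname{div}_z\mathbf v$ back into the transport representation \eqref{def-of-S}. The main obstacle I anticipate is bookkeeping the interaction between the $z$-coordinate differential operators and the cancellation structure — the operator $\operatorname{div}_z$ is not exactly the divergence dual to $\nabla_z$, and $D_z$ is not symmetric in the same way as $D_x$, so the clean cancellation of $\gamma\int w\,\operatorname{div}_z\mathbf v$ against $\gamma\int\nabla_z w\cdot\mathbf v$ and the sign of the viscous form both require the smallness of $\mathbf E$ from \eqref{small-jaco} to handle the error terms $R(\mathbf v,\operatorname{div})$, $R(\mathbf v,\Delta)$, etc., which should be treated as perturbations absorbed into the coercive part.
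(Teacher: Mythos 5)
Your proposal is correct and follows essentially the same route as the paper: test the momentum equation with $\mathbf v$, eliminate the coupling term $\gamma\int_\Omega w\,\operatorname{div}\mathbf v$ via the mass equation (your testing by $\gamma w$ is algebraically the paper's substitution in (\ref{L2-3})), exploit the sign of the $\Gamma_{out}$ boundary term and the smallness of $\partial_{z_1}\bigl((\bar U+\bar{\mathbf v}^{(1)})\circ\psi_{\bar{\mathbf v}}\bigr)$, obtain coercivity from Korn plus the optimal Poincar\'e constant $\pi^{-2}$ to absorb the shear term $\int_\Omega v^{(1)}v^{(2)}$ under $\mu>\frac{1}{2\pi^2}$, and recover $\|w\|_{L^\infty(L^2)}$ from the transport operator $S$ applied to $f-\operatorname{div}\mathbf v$. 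The only cosmetic difference is that you assemble the Korn--Poincar\'e constant from the two separate optimal inequalities, whereas the paper invokes the combined best-constant inequality from the cited reference; the conclusion and threshold are identical.
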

\begin{proof}
For any $\phi\in V$ we have
\begin{equation*}
\begin{aligned}
\int_{\Omega}(-\mu\Delta{\bf v}-(\mu+\nu)\nabla\operatorname{div}{\bf v})\cdot \phi dz&=\int_{\Omega}(2\mu D({\bf v}):\nabla \phi+\nu\operatorname{div}{\bf v}\operatorname{div}\phi) dz\\
&-\int_{\partial\Omega}2\mu {\bf n}\cdot D({\bf v})\cdot \phi ds.
\end{aligned}
\end{equation*}
Then multiplying (\ref{Lin-sys})\, by ${\bf v}$ and integrating over $\Omega$ 
we have 
\begin{equation}\label{L2-1}
\begin{aligned}
-&\frac{1}{2}\int_{\Omega} \partial_{z_1}(\bar U\circ\psi_{\bar{\bf v}})|{\bf v}|^2 dz+\int_{\Omega} ({\bf v}\cdot\nabla U_0)\cdot {\bf v}dz-\gamma\int_{\Omega} w\operatorname{div}{\bf v}dz\\
+&2\mu\int_{\Omega} |D({\bf v})|^2dz+\nu\int_{\Omega}|\operatorname{div}{\bf v}|^2dz+\int_{\Gamma_{0}} \alpha|{\bf v}|^2 ds=\int_{\Gamma_{0}}\tilde B{\bf v}\cdot\tau+\int_{\Omega} g\cdot {\bf v}.
\end{aligned}
\end{equation}
For the first term, we have
\begin{equation}
|\partial_{z_1}(\bar U\circ\psi_{\bar{\bf v}})|=|\partial_{x_1}\bar U + E_{1k}\partial_{x_k} \bar U|=|E_{12}|\leq E(\|{\bar{\bf v}}\|_{W^{2,p}}).
\end{equation}
As to the second term in the left hand side of (\ref{L2-1}), by applying the Poincar$\acute{e}$'s inequality we get
$$|\int_{\Omega} ({\bf v}\cdot\nabla U_0)\cdot {\bf v}dz|=|\int_{\Omega} v^{(1)} v^{(2)}dz\leq\frac{1}{2}\|{\bf v}\|^2_{L^2}\leq\frac{1}{2}\|\nabla{\bf v}\|^2_{L^2}.$$
The well known Korn inequality in \cite{MP2014,1994H} shows that
\begin{equation}\label{L2-2}
\mu\pi^2\|\nabla{\bf v}\|^2_{L^2}\leq 2\mu\int_{\Omega} |D({\bf v})|^2dz+\nu\int_{\Omega}|\operatorname{div}{\bf v}|^2dz,
\end{equation}
here the appearance of constant $\pi$ concerning the best constant in Korn inequality.
Involving in the continuity equation in (\ref{Lin-sys}) we have
\begin{equation}\label{L2-3}
\begin{aligned}
-\int_{\Omega} w\operatorname{div}{\bf v}&dz=\int_{\Omega} w((\bar U +\bar{\bf v}^{(1)})\circ\psi_{\bar{\bf v}})\partial_{z_1}wdz-\int_{\Omega} wfdz\\
=&-\frac{1}{2}\int_{\Omega} \partial_{z_1}((\bar U +\bar{\bf v}^{(1)})\circ\psi_{\bar{\bf v}})w^2dz-\frac{1}{2}\int_{\Gamma_{in}} ((\bar U +\bar{\bf v}^{(1)})\circ\psi_{\bar{\bf v}})w_{in}^2ds\\
&+\frac{1}{2}\int_{\Gamma_{out}}((\bar U +\bar{\bf v}^{(1)})\circ\psi_{\bar{\bf v}})w^2ds-\int_{\Omega} wfdz
\end{aligned}
\end{equation}
Due to the smallness assumption of ${\bar{\bf v}}$,\, the integral over 
$\Gamma_{out}$ will be nonnegative, and we also have
\begin{equation}
 |\partial_{z_1}(\bar{\bf v}^{(1)}\circ\psi_{\bar{\bf v}})|=|\partial_{x_1}\bar{\bf v}^{(1)} + E_{1k}\partial_{x_k} \bar{\bf v}^{(1)}|\leq E(\|{\bar{\bf v}}\|_{W^{2,p}}).
\end{equation}
Combing all those estimates together we arrive at
\begin{equation}\label{L2-key-1}
\begin{aligned}
\ [\mu\pi^2&-(\frac{1}{2}+\frac{E(\|\bar{\bf v}\|_{W^{2,p}})}{2})] |\nabla{\bf v}\|^2_{L^2}\\
&\leq\|g\|_{V^{*}}\|{\bf v}\|_{L^2}+|\tilde B|_{L^2({\Gamma_{0}})}\|{\bf v}\|_{L^2({\Gamma_{0}})}+E\|w\|^2_{L^2}
\\&+C|w_{in}|^2_{L^2(\Gamma_{in})}+\|w\|_{L^2}\|f\|_{L^2}
\end{aligned}
\end{equation}
Observe that the left hand side of (\ref{L2-key-1}) will be positive provided $\|{\bf v}\|_{W^{2,p}}$ is small enough and the viscosity efficiencies  $\mu>\frac{1}{2\pi^2}$.\,Finally by substituting $v=f-\operatorname{div} {\bf v}$ in (\ref{estm-of-dsty}) yields
\begin{equation}\label{L2-key2}
\|w\|_{L^{\infty}(L^2)}\leq C(|w_{in}|_{L^2(\Gamma_{in})}+\|\nabla{\bf v}\|_{L^2(\Omega)}++\|f\|_{L^2(\Omega)}).
\end{equation}
Combing (\ref{L2-key-1}) and (\ref{L2-key2}) by using trace theorem and Poincar$\acute{e}$'s inequality we get the estimates (\ref{L2-estm}).
\end{proof}
Next, we want to show higher regularity up to boundary by the method of difference quotient.
\begin{lemma}\label{diff-quo}
Suppose that $({\bf v}, w)\in H^{1}\times L^{\infty}(L^2)$ be a solution to the the system (\ref{Lin-sys}) with $(f,g,\tilde B,w_{in})\in H^1(\Omega)\times L^2(\Omega)\times H^{\frac{1}{2},2}(\Gamma_0)\times H^1({\Gamma_{in}})$,\,then we have
\begin{equation}\label{H2-est}
\|w\|_{H^1(\Omega)}+\|{\bf v}\|_{H^2(\Omega)}\leq C(\|f\|_{H^1}+\|g\|_{L^2}+|\tilde B|_{H^{\frac{1}{2},2}(\Gamma_{0})}+|w_{in}|_{H^1({\Gamma_{in}})}).
\end{equation}
\end{lemma}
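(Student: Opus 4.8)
The plan is to upgrade the $H^1$ estimate of Lemma \ref{H1-est} to an $H^2$ estimate for $\mathbf v$ and $H^1$ estimate for $w$ via the method of tangential difference quotients, treating interior regularity, tangential boundary regularity, and normal boundary regularity in turn. First I would fix notation for the difference quotient $D_h^s\varphi(z) = h^{-1}(\varphi(z+he_s)-\varphi(z))$ and recall the standard facts: $\|D_h^s\varphi\|_{L^2}\le C\|\partial_s\varphi\|_{L^2}$ uniformly in $h$, and conversely a uniform bound on $\|D_h^s\varphi\|_{L^2}$ gives $\partial_s\varphi\in L^2$. In the interior, and along the flat pieces of $\partial\Omega$ in the tangential ($z_2$) direction near $\Gamma_0$ and in the $z_2$ direction near $\Gamma_{in}\cup\Gamma_{out}$, one can apply $D_h^s$ to the momentum equation in \eqref{Lin-sys}, test against $D_{-h}^s\mathbf v$ (suitably cut off so the test function stays in $V$), and run exactly the energy argument of Lemma \ref{H1-est}. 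The coefficient $(\bar U\circ\psi_{\bar{\mathbf v}})$ and the lower-order terms produce commutators $[D_h^s,\,\cdot\,]$ which, since $\bar{\mathbf v}\in W^{2,p}\hookrightarrow C^1$ and $U_0$ is smooth, are bounded in $L^2$ by the already-controlled $\|\mathbf v\|_{H^1}$ plus lower-order data; the troublesome advective term $(\bar U\circ\psi)\partial_{z_1}\mathbf v$ is handled as before because its symmetric part under integration by parts only costs $\partial_{z_1}(\bar U\circ\psi) = E_{12}$, which is small. The pressure gradient $\gamma\nabla_z w$ is paired with $\mathrm{div}\,\mathbf v$ and then eliminated using the mass equation written through the operator $S$ of \eqref{def-of-S}: differentiating \eqref{tran-eq} tangentially shows $\partial_s w = S(\partial_s v - (\text{commutator}))$, so $\|\partial_s w\|_{L^\infty(L^2)}$ is controlled by $\|\partial_s\mathbf v\|_{L^2}$, $\|f\|_{H^1}$ and $\|w_{in}\|_{H^1}$. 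This closes the estimate for all derivatives except $\partial_{z_1z_1}\mathbf v$ and $\partial_{z_1}w$.

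The remaining normal derivatives are recovered algebraically rather than by difference quotients. Having bounded every second derivative of $\mathbf v$ that involves at least one $z_2$, I would read off $\partial_{z_1z_1}\mathbf v$ directly from the two components of the momentum equation: the principal part is $-\mu\Delta_z\mathbf v-(\mu+\nu)\nabla_z\mathrm{div}_z\mathbf v$, an elliptic operator whose coefficient matrix acting on $(\partial_{z_1z_1}v^1,\partial_{z_1z_1}v^2)$ is invertible (here $\mu>0$, $\mu+\nu\ge0$ guarantee the Lamé system is elliptic), so $\partial_{z_1z_1}\mathbf v$ equals a bounded combination of $\tilde g$, the advection term $(\bar U\circ\psi)\partial_{z_1}\mathbf v$, the zeroth-order term, $\gamma\nabla_z w$, and mixed/tangential second derivatives already estimated — \emph{except} that this still contains $\partial_{z_1}w$. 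To close, differentiate the mass equation \eqref{tran-eq} in $z_1$: $\partial_{z_1}w = \big((\bar U+\bar{\mathbf v}^{(1)})\circ\psi_{\bar{\mathbf v}}\big)^{-1}\big(f - \mathrm{div}_z\mathbf v\big)$, whose $L^2$ norm is bounded by $\|f\|_{L^2}+\|\mathbf v\|_{H^1}$ (the coefficient is bounded below by $c>0$). Substituting this back makes the system for $(\partial_{z_1z_1}\mathbf v,\partial_{z_1}w)$ closed, and its solvability follows again from the ellipticity of the Lamé operator; one gets the full bound \eqref{H2-est}.

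The main obstacle I anticipate is the boundary bookkeeping near $\Gamma_0$: the Navier condition $2\mu\,\mathbf n\cdot D_z(\mathbf v)\cdot\tau+\alpha\,\mathbf v\cdot\tau=\tilde B$ together with $\mathbf v\cdot\mathbf n=0$ must be shown to be preserved (up to controllable error) under tangential difference quotients so that $D_{-h}^s\mathbf v$ is an admissible test function in $V$ and the boundary integral $\int_{\Gamma_0}\tilde B\,\mathbf v\cdot\tau$ reproduces with $D_h^s\tilde B$, which costs one derivative of $\tilde B$ and hence the hypothesis $\tilde B\in H^{1/2,2}(\Gamma_0)$ entering via a trace estimate $|D_h^s\tilde B|_{L^2(\Gamma_0)}\le C|\tilde B|_{H^{1/2}(\Gamma_0)}$. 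Because $\Gamma_0$ is flat (two horizontal segments) the normal is constant there and no curvature terms appear, which is precisely why the flat geometry is used; the only genuine care is at the corners where $\Gamma_0$ meets $\Gamma_{in}\cup\Gamma_{out}$, handled by a partition of unity that localizes each difference-quotient argument away from the corner, the corner contribution being absorbed since $\mathbf v=0$ on $\Gamma_{in}\cup\Gamma_{out}$. A secondary nuisance is that the coefficients $(\bar U+\bar{\mathbf v}^{(1)})\circ\psi_{\bar{\mathbf v}}$ and $\bar U\circ\psi_{\bar{\mathbf v}}$ are only $W^{2,p}$, so when estimating commutators one repeatedly invokes $W^{1,p}\hookrightarrow L^\infty$ and the smallness of $\mathbf E$ from \eqref{small-jaco}; this is routine but must be tracked to ensure the bad constants multiply small quantities and get absorbed on the left, exactly as in the proof of Lemma \ref{H1-est}.
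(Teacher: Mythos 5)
Your interior argument is essentially the paper's: difference quotients in the $z_2$ direction (the direction transverse to the characteristics of the transport equation), a cut-off test function, elimination of the pressure term by substituting $\operatorname{div}(\Delta^h\mathbf v)$ from the differentiated mass equation and integrating by parts in $z_1$ (with a good sign on $\Gamma_{out}$ and known data on $\Gamma_{in}$), and then algebraic recovery of $\partial_{z_1}w$ from \eqref{tran-eq} and of $\partial_{z_1z_1}\mathbf v$ from the ellipticity of the Lam\'e operator. That part is sound and matches Lemma \ref{diff-quo} in the paper.

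The genuine gap is the treatment of $\Gamma_0$, and it begins with a confusion of directions: on $\Gamma_0=\{z_2=0\}\cup\{z_2=1\}$ the tangential direction is $z_1$, not $z_2$. If you really take $z_2$ difference quotients up to $\Gamma_0$, the shifted test function leaves $\Omega$ (or fails $\phi\cdot\mathbf n=0$), so the energy identity is not available there. If instead you take the tangential ($z_1$) difference quotients that your third paragraph describes, then two things go wrong with the closure step of your second paragraph. First, the derivatives left over near $\Gamma_0$ are the \emph{normal} ones, $\partial_{z_2z_2}\mathbf v$ and $\partial_{z_2}w$, and $\partial_{z_2}w$ is \emph{not} algebraically determined by the mass equation — only $\partial_{z_1}w$ is, since the transport operator is $a\,\partial_{z_1}$. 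The second component of the momentum equation gives one relation $(2\mu+\nu)\partial_{z_2z_2}v_2+\gamma\partial_{z_2}w=(\text{controlled})$, so $(\partial_{z_2z_2}v_2,\partial_{z_2}w)$ can only be closed by coupling this with the $z_2$-differentiated mass equation, i.e. by solving a transport equation for $\partial_{z_2}w$ with data $\partial_{z_2}w_{in}$ on $\Gamma_{in}$ — a step absent from your proposal and incompatible with a localization away from $\Gamma_{in}$. Second, the bound $|D_h^s\tilde B|_{L^2(\Gamma_0)}\le C|\tilde B|_{H^{1/2}(\Gamma_0)}$ is false uniformly in $h$; difference quotients of an $H^{1/2}$ function are only uniformly bounded in $H^{-1/2}$, so the boundary term must be paired in the $H^{-1/2}$--$H^{1/2}$ duality with the trace of the test function. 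The paper avoids all of this by extending $(w,v_1)$ evenly and $v_2$ oddly across $z_2=0$ and $z_2=1$; the conditions $\mathbf v\cdot\mathbf n=0$ and the Navier condition are exactly the compatibility conditions for this reflection, the extended pair solves the same system distributionally on the enlarged domain, and the estimate near $\Gamma_0$ becomes an interior one for which your (and the paper's) $z_2$ difference-quotient argument applies verbatim. You should either adopt that reflection or supply the missing coupled algebraic--transport recovery of $(\partial_{z_2z_2}v_2,\partial_{z_2}w)$ near $\Gamma_0$.
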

\begin{proof}
Let us firstly focus on the interior estimates. By using the standard $L^2$ theory of elliptic system, we only need to find a bound of $\|\partial_{z_2}w\|_{L^2}$. To this end, define the difference quotient operator along $z_2$ direction $\Delta_{2}^{-h}$ as
$$\Delta_{2}^{-h}\phi:=-\frac{\phi(z_1,z_2 -h)-\phi(z_1,z_2)}{h}.$$
Effecting $\Delta_{2}^{h}$ on the mass equation in (\ref{Lin-sys}), we have
\begin{equation}\label{def-quo-2}
(\bar U +\bar{\bf 
v}^{(1)})\partial_{z_1}(\Delta_{2}^{h}w)+\partial_{z_1}w^h\Delta_{2}^{h}(\bar U 
+\bar{\bf 
v}^{(1)})+\operatorname{div} (\Delta_{2}^{h}{\bf v}) 
=\Delta_{2}^{h}f,
\end{equation}
where $w^h(z_1,z_2)=w(z_1,z_2+h)$. Hence, in order to find a bound of  
$\|\partial_{z_2}w\|_{L^2}$, it is sufficient to find a bound of 
$\|\partial_{z_2}{\bf v}\|^2_{H^1}$. Since $({\bf v}, w)\in H^{1}\times 
L^{\infty}(L^2)$ be a solution of  the system (\ref{Lin-sys}), which means
\begin{equation}\label{def-wk-solu-1}
\begin{aligned}
\int_{\Omega} \bar U \partial_{z_1}{\bf v}& \cdot \phi dz +\int_{\Omega}  ({\bf v}\cdot\nabla U_0)\cdot \phi dz-\gamma\int_{\Omega} w\operatorname{div} \phi dz+\mu\int_{\Omega} \nabla{\bf v} :\nabla \phi dz\\
&+(\mu+\nu)\int_{\Omega} \operatorname{div}{\bf v}\operatorname{div}\phi dz=\int_{\Omega}  g\cdot \phi dz
\end{aligned}
\end{equation}
holds for any $\phi\in C_c^{\infty}(\Omega)$. There is no risk of confusing, we may remove the superposition of $\psi$.
Replace $\phi$ in (\ref{def-wk-solu-1}) by $\Delta_{2}^{-h}\phi$,\,then we have
\begin{equation}\label{dii-quti-1}
\begin{aligned}
-&\gamma\int_{\Omega} w\operatorname{div} (\Delta_{2}^{-h}\phi) dz+\mu\int_{\Omega} \nabla{\bf v} :\nabla (\Delta_{2}^{-h}\phi) dz+(\mu+\nu)\int_{\Omega} \operatorname{div}{\bf v}\operatorname{div}(\Delta_{2}^{-h}\phi) dz\\
=&\gamma\int_{\Omega} (\Delta_{2}^{h}w)\operatorname{div} \phi dz-\mu\int_{\Omega} \nabla(\Delta_{2}^{h}{\bf v}) :\nabla \phi dz-(\mu+\nu)\int_{\Omega} \operatorname{div}(\Delta_{2}^{h}{\bf v})\operatorname{div}\phi dz\\
=&-\int_{\Omega} \bar U \partial_{z_1}{\bf v} \cdot (\Delta_{2}^{-h}\phi) dz -\int_{\Omega}  ({\bf v}\cdot\nabla U_0)\cdot (\Delta_{2}^{-h}\phi) dz+\int_{\Omega} g\cdot \Delta_{2}^{-h}\phi.
\end{aligned}
\end{equation}
For any interval $I\subset\subset(0,1)$, selecting a cut-off function $\eta(t)\in C_c^{\infty}(0,1)$,\, such that $0\leq\eta\leq 1$ and $\eta\equiv 1$ in $I$.
 Substituting $\phi=\eta^2(z_2)\Delta_{2}^{h}{\bf v}$ in (\ref{dii-quti-1}) and 
 combing with (\ref{def-quo-2}), a direct computation shows that
\begin{equation}
\begin{aligned}
\gamma\int_{\Omega} (\Delta_{2}^{h}w)\operatorname{div} \phi dz&=\gamma\int_{\Omega} (\Delta_{2}^{h}w)[2\eta(\Delta_{2}^{h}{\bf v})\cdot\nabla\eta+\eta^2 \operatorname{div}\Delta_{2}^{h}{\bf v}]  dz\\
&=\gamma\int_{\Omega} (\Delta_{2}^{h}w)[2\eta(\Delta_{2}^{h}{\bf 
v})\cdot\nabla\eta -\eta^2(\bar U +\bar{\bf 
v}^{(1)})\partial_{z_1}(\Delta_{2}^{h}w)]  dz\\
&+\gamma\int_{\Omega} (\Delta_{2}^{h}w)[\eta^2\Delta_{2}^{h}f 
-\eta^2\partial_{z_1}w^h\Delta_{2}^{h}(\bar U 
+\bar{\bf 
	v}^{(1)})]  dz\\
&=\gamma\int_{\Omega} (\Delta_{2}^{h}w)[2\eta(\Delta_{2}^{h}{\bf 
v})\cdot\nabla\eta+\eta^2\Delta_{2}^{h}f-\eta^2\partial_{z_1}w^h\Delta_{2}^{h}(\bar 
U 
+\bar{\bf 
v}^{(1)}) ]  dz\\
&+\frac{\gamma}{2}\int_{\Omega} \partial_{z_1} 
(\bar U +\bar{\bf 
v}^{(1)})|\eta\Delta_{2}^{h}w|^2-\frac{\gamma}{2}\int_{\Gamma_{out}}  (\bar U 
+\bar{\bf 
v}^{(1)})|\eta\Delta_{2}^{h}w|^2 ds,\\
&+\frac{\gamma}{2}\int_{\Gamma_{in}} 
(\bar U +\bar{\bf v}^{(1)})|\eta\Delta_{2}^{h}w|^2 ds 
\end{aligned}
\end{equation}
and
\begin{equation}\label{def-quo-4}
\begin{aligned}
-\mu&\int_{\Omega} \nabla(\Delta_{2}^{h}{\bf v}) :\nabla \phi dz-(\mu+\nu)\int_{\Omega} \operatorname{div}(\Delta_{2}^{h}{\bf v})\operatorname{div}\phi dz\\
=&-\mu\int_{\Omega} \eta^2|\nabla(\Delta_{2}^{h}{\bf v})|^2 dz-(\mu+\nu)\int_{\Omega} \eta^2|\operatorname{div}(\Delta_{2}^{h}{\bf v})|^2dz\\
+&2\mu\int_{\Omega} \eta\nabla(\Delta_{2}^{h}{\bf v}) :(\nabla\eta\Delta_{2}^{h} {\bf v})dz+2(\mu+\nu)\int_{\Omega} \eta\operatorname{div}(\Delta_{2}^{h}{\bf v})\Delta_{2}^{h} {\bf v}\cdot\nabla\eta dz,
\end{aligned}
\end{equation}
Combing (\ref{dii-quti-1})-(\ref{def-quo-4}), we obtain
\begin{equation}\label{def-quo-5}
\begin{aligned}
\int_{\Omega} \eta^2|\nabla(\Delta_{2}^{h}{\bf v})|^2&\leq C(\delta\|\eta \nabla(\Delta_{2}^{h}){\bf v}\|^2_{L^2}+\|{\bf v}\|^2_{H^1}+\|\partial_{z_2}f\|^2_{L^2}+\|g\|^2_{L^2}+|\partial_{z_2}w_{in}|^2_{L^2(\Gamma_{in})})\\
&\leq C(\|f\|^2_{H^1}+\|g\|^2_{L^2}+\|\tilde B\|^2_{L^2(\Gamma_{0})}+\|w_{in}\|^2_{H^1({\Gamma_{in}})}),
\end{aligned}
\end{equation}
here $\delta$ is a arbitrary small constant. Letting $h\rightarrow 0 $ in 
$(\ref{def-quo-5})$,\, we get
\begin{equation*}
\|\eta \partial_{z_2}{\bf v}\|^2_{{H^1}(\Omega)}\leq C(\|f\|^2_{{H^1}(\Omega)}+\|g\|^2_{{L^2}(\Omega)}+\|\tilde B\|^2_{L^2(\Gamma_{0})}+\|w_{in}\|^2_{H^1({\Gamma_{in}})}),
\end{equation*}
denote $\Omega^{\prime}=(0,1)\times I$, which implies that
\begin{equation}
\|\partial_{z_2}{\bf v}\|^2_{{H^1}(\Omega^{\prime})}\leq C(\|f\|^2_{{H^1}(\Omega)}+\|g\|^2_{{L^2}(\Omega)}+\|\tilde B\|^2_{L^2(\Gamma_{0})}+\|w_{in}\|^2_{H^1({\Gamma_{in}})}).
\end{equation}
On the other hands, (\ref{def-quo-2}) implies that
\begin{equation}
\|\partial_{z_2}w\|_{L^2(\Omega^{\prime})}\leq C(\|\partial_{z_2}{\bf v}\|_{{H^1}(\Omega^{\prime})}+\|f\|_{{H^1}(\Omega)}+|\partial_{z_2}w(0,z_2)|_{L^2(\Gamma_{in})}),
\end{equation}
which combing with the standard $H^2$ estimates of elliptic system gives the interior estimate of (\ref{H2-est}) in $\Omega^{\prime}$. \\
As to the boundary estimates near $\Gamma_{0}^{-}:=\{(z_1,z_2)|0<z_1<1,z_2=0\}$, we 
extend the domain $\Omega$ to $\Omega^*=(0,1)\times(-1,1)$, and denote  the even 
extension of $w,v_1$, the odd extension of $v_2$ with respect to $z_1=0$ as 
$w^*,v_1^*,v_2^*$ respectively. More precisely,
\begin{equation*}
\begin{aligned}
  v_1^*(z_1,z_2)&=
  \begin{cases}
    v_1(z_1,z_2)\quad\quad\, 0<z_2<1,\\
    \hat v_1(z_1,z_2)=v_1(z_1,-z_2)\quad -1<z_2<0,
  \end{cases}\\
  v_2^*(z_1,z_2)&=
  \begin{cases}
    v_2(z_1,z_2)\quad\quad\,\, 0<z_2<1,\\
    \hat v_2(z_1,z_2)=-v_2(z_1,-z_2)\quad -1<z_2<0.
  \end{cases}
  \end{aligned}
\end{equation*}
Due to the boundary condition in (\ref{Lin-sys}), we have $v_2=0$ on $\Gamma_0^-$ and the Navier boundary conditions preserve this symmetry, that is to say
\begin{equation*}
  2\mu\mathbf{n}\cdot D({\bf v})\cdot \tau +\alpha{\bf v}\cdot \tau=2\mu\hat{\mathbf{ n}}\cdot D({\hat{\bf v}})\cdot \tau +\alpha{\hat{\bf v}}\cdot \tau\quad\text{on}\,\,\Gamma_0^-,
\end{equation*}
where $\hat{\mathbf{ n}}$ is the outer normal vector of the domain $(0,1)\times(-1,0)$ on $\Gamma_0^-$, which implies that
\begin{equation*}
  \partial_{z_2}v_1^*=-\frac{\tilde B}{\mu}+\frac{\alpha}{\mu}v_1+\partial_1v_2\quad\text{on}\,\,\Gamma_0^-.
\end{equation*}
Hence the extended function $(v_1^*,v_2^*)$ still belongs to $H^{1}(\Omega^*)$, and it is easy to check that $({\bf v}^*,w^*)$ satisfy the system (\ref{Lin-sys}) in the senses of $\mathfrak{D}^{\prime}(\Omega^*)$ with corresponding extension of source terms. It follows that the estimates near the boundary $\Gamma_0^-$ is converted to an interior one, a slight modification of the above proof derives the desired estimates. An identical argument can be applied on $\Gamma_0^+:=\{(z_1,z_2)|0<z_1<1,z_2=1\}$ and we get the global regularity as claimed.
\end{proof}
\subsection{$W^{2,p}$ estimates of linear system}
The crucial point to establish the $W^{2,p}$ estimates is to find a bound of  $\|\partial_{z_2}w\|_{L^p}$, a key observation shows that it can be bounded by $\|\operatorname{curl}\partial_{z_2}{\bf v}\|_{L^p}$. To this end, we introduce the following lemma concerning the so called Bogovskii operator which is proved by Bogovskii in \cite{1979B}.
\begin{lemma}
  Suppose that $\Omega\subset\mathbb{R}^n$ is starlike with respect to some ball contained in it, and that $1<p<\infty$ and $n\geq2$, Then, the exists a constant $C>0$, depending only on $n, p$ and $\Omega$, such that for any $h\in L^p(\Omega)$ with $\int_{\Omega}h(x) dx=0$, there is a vector field $\omega\in W_{0}^{1,p}$ satisfying
\begin{equation*}
  \begin{aligned}
    \begin{cases}
      \divg \omega =h  \quad&\text{in}\quad \Omega,\\
      \omega=0  \quad&\text{on}\quad \partial\Omega.
    \end{cases}
  \end{aligned}
\end{equation*}
and
\begin{equation*}
\|\omega\|_{W^{1,p}(\Omega)}\leq C\|h\|_{L^p(\Omega)},
\end{equation*}
where the constant $C$ depends only on $p$ and $\Omega$.
\end{lemma}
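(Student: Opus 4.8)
The plan is to produce $\omega$ by the classical explicit formula of Bogovskii and then read the $W^{1,p}$ bound off from Calder\'on--Zygmund theory. First I would fix, once and for all, a function $\theta\in C_c^\infty(B)$ with $\int_{\mathbb R^n}\theta\,dx=1$, where $B$ is a ball contained in $\Omega$ with respect to which $\Omega$ is starlike, and for $h\in C_c^\infty(\Omega)$ with $\int_\Omega h\,dx=0$ set
\begin{equation*}
\omega(x):=\int_\Omega h(y)\,\frac{x-y}{|x-y|^n}\Bigl(\int_{|x-y|}^{\infty}\theta\bigl(y+r\tfrac{x-y}{|x-y|}\bigr)\,r^{n-1}\,dr\Bigr)\,dy=:\int_\Omega N(x,y)\,h(y)\,dy .
\end{equation*}
A change of variables in the inner integral followed by differentiation under the integral sign gives the pointwise identity $\divg_x\omega(x)=h(x)-\theta(x)\int_\Omega h\,dy$, which equals $h(x)$ by the mean-zero hypothesis; and the starlikeness of $\Omega$ with respect to $B$ guarantees that $\operatorname{supp}\omega$ stays within a compact subset of $\Omega$ whenever $\operatorname{supp}h$ does, so $\omega$ is smooth and compactly supported, in particular $\omega\in W_0^{1,p}(\Omega)$.

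The core of the argument is the norm bound. The kernel satisfies the elementary estimate $|N(x,y)|\le C(\theta,\Omega)\,|x-y|^{1-n}$ on the bounded set $\Omega$, so $\omega$ arises from $h$ through a weakly singular kernel and $\|\omega\|_{L^p(\Omega)}\le C\|h\|_{L^p(\Omega)}$ follows from Young's (equivalently Schur's) inequality. For the gradient I would differentiate $N$ in $x$ and organise the result as
\begin{equation*}
\partial_{x_j}\omega(x)=\mathrm{p.v.}\int_\Omega K_j(x,x-y)\,h(y)\,dy+\int_\Omega G_j(x,y)\,h(y)\,dy+c_j(x)\,h(x),
\end{equation*}
where $K_j(x,z)=|z|^{-n}k_j\bigl(x,z/|z|\bigr)$ is homogeneous of degree $-n$ in $z$, $k_j$ is smooth with $\int_{S^{n-1}}k_j(x,\sigma)\,d\sigma=0$, $G_j$ satisfies $|G_j(x,y)|\le C|x-y|^{1-n}$, and $c_j$ is bounded. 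The first term is a Calder\'on--Zygmund singular integral with parameter $x$ and is therefore bounded on $L^p(\mathbb R^n)$ for $1<p<\infty$, with constant depending only on $n,p$ and the size of $k_j$; the remaining two terms are handled as above. Summing in $j$ gives $\|\nabla\omega\|_{L^p(\Omega)}\le C\|h\|_{L^p(\Omega)}$, hence $\|\omega\|_{W^{1,p}(\Omega)}\le C\|h\|_{L^p(\Omega)}$ with $C=C(n,p,\Omega)$. I expect this extraction of the Calder\'on--Zygmund structure of $\partial_{x_j}N$ — and in particular the verification that the principal symbol $k_j(x,\cdot)$ has vanishing mean on the sphere — to be the main obstacle; the rest is bookkeeping with integrable kernels.

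It remains to remove the smoothness restriction on $h$. Given $h\in L^p(\Omega)$ with $\int_\Omega h=0$, I would pick $h_k\in C_c^\infty(\Omega)$ with $h_k\to h$ in $L^p(\Omega)$ and correct each $h_k$ by subtracting $\bigl(\int_\Omega h_k\bigr)\theta$ so that the approximants still have zero mean; the associated solutions $\omega_k$ are Cauchy in $W_0^{1,p}(\Omega)$ by linearity and the bound just established, their limit $\omega$ lies in $W_0^{1,p}(\Omega)$, satisfies $\divg\omega=h$ in $\mathfrak D'(\Omega)$, and obeys $\|\omega\|_{W^{1,p}(\Omega)}\le C\|h\|_{L^p(\Omega)}$, which is the assertion.
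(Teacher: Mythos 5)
The paper does not prove this lemma at all: it is quoted verbatim as Bogovskii's theorem and justified only by the citation to Bogovskii (1979), so there is no in-paper argument to compare yours against. Your sketch is the classical proof of that cited result: the explicit Bogovskii kernel, the identity $\divg\omega=h-\theta\int_\Omega h$, the support argument from starlikeness, Schur's test for the zeroth-order bound, the Calder\'on--Zygmund decomposition of $\partial_{x_j}N$ for the gradient bound, and a density argument to pass from $C_c^\infty$ to $L^p$. This is correct in outline and is exactly the argument found in the standard references (Bogovskii's original paper; Galdi's monograph, Theorem III.3.1; Novotn\'y--Stra\v{s}kraba). The only place where your write-up is genuinely schematic is the one you yourself flag: exhibiting $\partial_{x_j}N$ as a degree $-n$ homogeneous kernel with vanishing spherical mean plus a weakly singular remainder plus a bounded multiple of $h(x)$. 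That decomposition does work out (the mean-zero property of $k_j(x,\cdot)$ follows because the singular part arises by differentiating a kernel homogeneous of degree $1-n$ in $x-y$), but a complete proof would have to carry out that computation explicitly; as a proposal, nothing here would fail.
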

By constructing a proper test function defined by Bogovskii operator, we can establish the following results.
\begin{lemma}
Assume that $({\bf v},w)$ be a solution of (\ref{Lin-sys}) with $(f,g,\tilde B,w_{in})\in  W^{1,p}(\Omega)\times L^p(\Omega)\times H^{\frac{1}{2},2}(\Gamma_0)\times W^{1,p}(\Gamma_{in})$, then for any $p\in(2,\infty)$ we have
\begin{equation}\label{w2p}
\begin{aligned}
\|&\partial_{z_2}w\|_{L^p}+|\partial_{z_2}w|_{L^p(\Gamma_{out})}\\
&\leq C(\|\operatorname{curl}\partial_{z_2}{\bf v}\|_{L^p}+\|f\|_{W^{1,p}}+\|g\|_{L^p}+|\tilde B|^2_{H^{\frac{1}{2},2}(\Gamma_{0})}+|w_{in}|_{W^{1,p}({\Gamma_{in}})})
\end{aligned}
\end{equation}
\end{lemma}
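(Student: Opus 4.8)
The quantity $\partial_{z_2}w$ is the only piece of $\|w\|_{W^{1,p}}$ not yet controlled: the mass equation in (\ref{Lin-sys}) gives $\partial_{z_1}w=a^{-1}(f-\operatorname{div}_z\mathbf v)$ with $a:=(\bar U+\bar{\mathbf v}^{(1)})\circ\psi_{\bar{\mathbf v}}\ge c>0$, so $\|\partial_{z_1}w\|_{L^p}\le C(\|f\|_{L^p}+\|\operatorname{div}_z\mathbf v\|_{L^p})\le C(\|f\|_{L^p}+\|\mathbf v\|_{H^2})$ by the planar embedding $H^2(\Omega)\hookrightarrow W^{1,p}(\Omega)$, and $\|\mathbf v\|_{H^2}$ is already bounded by the data through Lemma~\ref{diff-quo}. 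The plan is to derive a closed transport equation for $q:=\partial_{z_2}w$ along the $z_1$–characteristics. Differentiating the mass equation in $z_2$ — rigorously, working with the difference quotient $\Delta_2^h$ as in the proof of Lemma~\ref{diff-quo} and letting $h\to0$ at the end — one gets
\[
a\,\partial_{z_1}q+(\partial_{z_2}a)\,\partial_{z_1}w+\partial_{z_2}\operatorname{div}_z\mathbf v=\partial_{z_2}f ,
\]
whose only dangerous term is the second–order velocity quantity $\partial_{z_2}\operatorname{div}_z\mathbf v$, not available a priori.

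To eliminate it I would use the second component of the momentum equation, which for a shear background simplifies because $(\mathbf v\cdot\nabla_z U_0)^{(2)}=0$, reading $a_0\,\partial_{z_1}v_2+\gamma\,\partial_{z_2}w-\mu\Delta_z v_2-(\mu+\nu)\,\partial_{z_2}\operatorname{div}_z\mathbf v=g_2$ with $a_0:=\bar U\circ\psi_{\bar{\mathbf v}}$. Feeding in the two–dimensional identity $\Delta_z v_2=\partial_{z_2}\operatorname{div}_z\mathbf v+\partial_{z_1}\operatorname{curl}_z\mathbf v$ and solving,
\[
(2\mu+\nu)\,\partial_{z_2}\operatorname{div}_z\mathbf v=\gamma\,\partial_{z_2}w-g_2+a_0\,\partial_{z_1}v_2-\mu\,\partial_{z_1}\operatorname{curl}_z\mathbf v ,
\]
and substituting into the transport equation; the pressure then supplies a damping term $\tfrac{\gamma}{2\mu+\nu}\,q$:
\[
a\,\partial_{z_1}q+\tfrac{\gamma}{2\mu+\nu}\,q=\partial_{z_2}f-(\partial_{z_2}a)\,\partial_{z_1}w+\tfrac1{2\mu+\nu}\bigl(g_2-a_0\,\partial_{z_1}v_2+\mu\,\partial_{z_1}\operatorname{curl}_z\mathbf v\bigr).
\]
On the right, $\partial_{z_2}f\in L^p$ is data, while $\partial_{z_1}w$, $(\partial_{z_2}a)\,\partial_{z_1}w$ and $a_0\,\partial_{z_1}v_2$ are all dominated by $\|f\|_{L^p}+\|\mathbf v\|_{H^2}$ as above ($\partial_{z_2}a$ being bounded by the Jacobian lemma), so the only genuinely outstanding contribution is the first–order vorticity quantity $\partial_{z_1}\operatorname{curl}_z\mathbf v$, which is exactly what is carried on the right of (\ref{w2p}).

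Because the change of variables (\ref{Chge-Varb}) was arranged so that convection is precisely $\partial_{z_1}$, the characteristics are the horizontal segments $\{z_2=\mathrm{const}\}$; integrating the last display forward from $\tilde\Gamma_{in}$, where $q=\partial_{z_2}w_{in}$, with the bounded (indeed $\le1$) integrating factor $\exp\bigl(-\int_0^{z_1}\tfrac{\gamma}{(2\mu+\nu)a}\bigr)$, and then applying Minkowski's inequality over $\Omega$ and on the slice $\{z_1=1\}$, produces (\ref{w2p}), the lower–order pieces being absorbed via Lemmas~\ref{H1-est} and \ref{diff-quo} after embedding the $W^{1,p}$– and $L^p$–norms of the data into $H^1$ and $L^2$. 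Since $g$ lies only in $L^p$ and $\mathbf v$ only in $H^2$, the formal steps above must be performed at the level of the weak formulation (\ref{def-wk-solu-1}), with $\Delta_2^h$ in place of $\partial_{z_2}$ and with a test field furnished by the Bogovskii operator: given the candidate $q$, take $\mathbf h\in W_0^{1,p'}(\Omega)$ with $\operatorname{div}\mathbf h=|q|^{p-2}q-\tfrac1{|\Omega|}\int_\Omega|q|^{p-2}q$ and $\|\mathbf h\|_{W^{1,p'}}\le C\|q\|_{L^p}^{p-1}$; pairing $\mathbf h$ with the momentum equation turns the pressure term into $\int_\Omega|q|^p$ up to a mean absorbed by the $H^1$ bound, while the viscous and convective terms transfer onto $\mathbf h$ and are dominated by $\|\mathbf v\|_{H^2}$ and $\|\operatorname{curl}_z\mathbf v\|_{W^{1,p}}$. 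I expect the elimination of $\partial_{z_2}\operatorname{div}_z\mathbf v$ to be the crux: one must split it into its damping part — a positive multiple of $q$, absorbed, and the place where $\mu>\tfrac1{2\pi^2}$ ultimately enters via Lemma~\ref{H1-est} — and a harmless first–order vorticity part, carrying this out compatibly with the mixed Dirichlet/Navier conditions on $\Gamma_0$, the Navier part surfacing through the trace $|\tilde B|_{H^{\frac12,2}(\Gamma_0)}$ after integration by parts.
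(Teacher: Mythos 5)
Your argument is essentially correct but follows a genuinely different route from the paper. The paper does not derive a transport ODE for $q=\partial_{z_2}w$; instead it works by duality: it takes the Bogovskii field $\phi=\mathfrak{B}\bigl(\partial_{z_2}w|\partial_{z_2}w|^{p-2}-M\bigr)$ (exactly the test field you sketch in your last paragraph), pairs it with the $z_2$-differentiated momentum equation so that the pressure term produces $\gamma\int_\Omega|\partial_{z_2}w|^p$, splits the viscosity into $(2\mu+\nu)\nabla\operatorname{div}\partial_{z_2}{\bf v}$ plus a curl part (whence $\|\operatorname{curl}\partial_{z_2}{\bf v}\|_{L^p}$ on the right), and then eliminates the remaining $\int\partial_{z_2}w|\partial_{z_2}w|^{p-2}\operatorname{div}\partial_{z_2}{\bf v}$ by substituting the $z_2$-differentiated mass equation and integrating $\tfrac1p(\bar U+\bar{\bf v}^{(1)})\partial_{z_1}|\partial_{z_2}w|^p$ by parts in $z_1$; the $\Gamma_{out}$ boundary term comes out with a favourable sign, which is what puts $|\partial_{z_2}w|_{L^p(\Gamma_{out})}$ on the left. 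Your effective-viscous-flux elimination of $\partial_{z_2}\operatorname{div}_z{\bf v}$ through the second momentum component, followed by integration along the $z_1$-characteristics with the damping factor, is a legitimate alternative (it is closer to the Lagrangian arguments in the cited works of Piasecki and Mucha--Piasecki) and has the advantage of being pointwise and of making the role of $2\mu+\nu>0$ transparent; the paper's duality argument avoids ever writing a second-order quantity pointwise and gets the $\Gamma_{out}$ trace from a signed boundary term rather than from a slice evaluation.

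Two caveats. First, your right-hand side carries $\partial_{z_1}\operatorname{curl}_z{\bf v}$, whereas the stated inequality carries $\|\operatorname{curl}\partial_{z_2}{\bf v}\|_{L^p}=\|\partial_{z_2}\operatorname{curl}_z{\bf v}\|_{L^p}$; these are different components of $\nabla\operatorname{curl}_z{\bf v}$, so strictly speaking you prove a variant of \eqref{w2p}, not \eqref{w2p} itself. This is harmless for the paper's scheme, since the next step \eqref{w2p-key} controls the full $\|\operatorname{curl}{\bf v}\|_{W^{1,p}}$, but you should state the modified inequality honestly. Second, your closing paragraph superimposes the Bogovskii duality pairing onto the characteristics argument as a ``rigorous implementation''; these are two self-contained proofs, not one, and mixing them obscures which one you are actually running --- either integrate the (difference-quotient regularized) ODE along characteristics, or test with the Bogovskii field, but not both.
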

\begin{proof}
Setting $\phi=\mathfrak{B}(\partial_{z_2}w|\partial_{z_2}w|^{p-2}-M)$,\,then by the properties of Bogovskii operator we have
\begin{equation}\label{est-bgski}
\|\phi\|_{W^{1,\frac{p}{p-1}}(\Omega)}\leq C\|\partial_{z_2}w\|^{p-1}_{L^p(\Omega)},
\end{equation}
where $M=\int_{\Omega} \partial_{z_2}w|\partial_{z_2}w|^{p-2} dz$.\, Next, 
differentiate the momentum equation in (\ref{Lin-sys}) with respect to $z_2$ and 
multiply the resulting equations by  $\phi$ we have
\begin{equation}\label{w2p-1}
\begin{aligned}
-&\gamma\int_{\Omega} |\partial_{z_2}w|^pdz+\gamma M\int_{\Omega} |\partial_{z_2}w|dz+(2\mu+\nu)\int_{\Omega}\partial_{z_2}w|\partial_{z_2}w|^{p-2} \operatorname{div}\partial_{z_2}{\bf v} dz\\
+&\mu\int_{\Omega}\partial_{z_2}(\operatorname{curl}{\bf v}):\nabla\phi  dz+(\mu+\nu)M\int_{\Omega}\operatorname{div}\partial_{z_2}{\bf v}dz+\int_{\Omega}(\partial_{z_2}\bar U)\partial_{z_1} {\bf v}\cdot\phi dz\\
+&\int_{\Omega}\bar U (\partial_{z_1,z_2}{\bf v})\cdot\phi dz+\int_{\Omega}\partial_{z_2}{\bf v}\nabla U_0 \phi  dz
+\int_{\Omega}{\bf v}\cdot\nabla(\partial_{z_2}U_0)\phi dz=-\int_{\Omega}g\partial_{z_2}\phi  dz
\end{aligned}
\end{equation}
Consequently,
\begin{equation}\label{w2p-2}
\begin{aligned}
\gamma\int_{\Omega}& |\partial_{z_2}w|^pdz-(2\mu+\nu)\int_{\Omega}\partial_{z_2}w|\partial_{z_2}w|^{p-2} \operatorname{div}\partial_{z_2}{\bf v} dz\\
&\leq(\|\operatorname{curl}\partial_{z_2}{\bf v}\|_{L^p}+\|g\|_{L^p}+\|{\bf v}\|_{W^{1,p}})\|\partial_{z_2}w\|^{p-1}_{L^p}\\
&+(\|\partial_{z_2} w\|_{L^{1}}+\|\nabla^2 {\bf v}\|_{L^1})\|\partial_{z_2} w\|^{p-1}_{L^{p-1}}.
\end{aligned}
\end{equation}
To control the second term of the l.h.s of (\ref{w2p-2}), we differentiate the mass equation in (\ref{Lin-sys}) to obtain
\begin{equation}\label{w2p-3}
\partial_{z_2}(\bar U +\bar{\bf v}^{(1)})\partial_{z_1}w + (\bar U +\bar{\bf v}^{(1)})\partial_{z_1,z_2}w+\operatorname{div}\partial_{z_2}{\bf v}=\partial_{z_2}f
\end{equation}
and hence
\begin{equation}\label{w2p-4}
\begin{aligned}
-&\int_{\Omega}\partial_{z_2}w|\partial_{z_2}w|^{p-2} \operatorname{div}\partial_{z_2}{\bf v} dz\\
&=\int_{\Omega}\partial_{z_2}w|\partial_{z_2}w|^{p-2}[\partial_{z_2}(\bar U +\bar{\bf v}^{(1)})\partial_{z_1}w + (\bar U +\bar{\bf v}^{(1)})\partial_{z_1,z_2}w-\partial_{z_2}f] dz\\
&=\int_{\Omega}\partial_{z_2}(\bar U +\bar{\bf v}^{(1)})\partial_{z_2}w|\partial_{z_2}w|^{p-2}\partial_{z_1}w dz -\int_{\Omega}\partial_{z_2}w|\partial_{z_2}w|^{p-2}\partial_{z_2}f\\
&-\frac{1}{p}\int_{\Omega}\partial_{z_1}(\bar U+\bar{\bf 
v}^{(1)})|\partial_{z_2}w|^p+\int_{\Gamma_{out}}(\bar U +\bar{\bf 
v}^{(1)})|\partial_{z_2}w|^p ds-\int_{\Gamma_{in}}(\bar U +\bar{\bf 
v}^{(1)})|\partial_{z_2}w|^p ds.
\end{aligned}
\end{equation}
Recall that $\bar U(z_1,z_2)=\bar U\circ \psi_{\bar{\bf v}}$ and the definition of 
$\psi_{\bar{\bf v}}$,\, due to the smallness 
of $\|\partial_{z_1}\bar{\bf v}^{(1)}\|_{\infty}$, we have 
\begin{equation*}
	|\partial_{z_1}(\bar U+\bar{\bf 
		v}^{(1)})|\leq E,
\end{equation*}
which combine with $(\ref{w2p-2})$ and $(\ref{w2p-4})$, after using Young's 
inequality and (\ref{H2-est}) we arrive at
\begin{equation}\label{w2p-1}
\begin{aligned}
&\int_{\Omega} |\partial_{z_2}w|^pdz+\int_{\Gamma_{out}}|\partial_{z_2}w|^p ds\\
&\leq(\|\operatorname{curl}\partial_{z_2}{\bf v}\|^p_{L^p}+\delta\|\partial_{z_1}w\|^p_{L^p}+\|g\|^p_{L^p}+\|f\|^p_{W^{1,p}}+|w_{in}|^p_{W^{1,p}}(\Gamma_{in})),
\end{aligned}
\end{equation}
here $\delta$ is a small constant depending on $\|{\bar{\bf v}}\|_{2,p}$, which 
yields $(\ref{w2p})$.
\end{proof}
The $W^{2,p}$ estimates is done if  $\|\operatorname{curl}\partial_{z_2}{\bf 
v}\|_{L^p}$ is well controlled. Roughly speaking, by taking $\operatorname{curl}$ on 
the both side of momentum equations in $(\ref{Lin-sys})$, One can find  
$\operatorname{curl} {\bf v}$ satisfies a Laplace systems, and we can expect there 
is a good estimate in the interior. But unfortunately, it's  not clear about the 
behavior of $\operatorname{curl}{\bf v}$ near $\Gamma_{in}$ and $\Gamma_{out}$. 
Nevertheless, the authors have proved by a delicate construction in \cite{GJZ2014} 
the following beautiful results,
\begin{equation}\label{w2p-key}
\|\operatorname{curl}{\bf v}\|_{W^{1,p}(\Omega)}\leq C(\epsilon\|{\bf v}\|_{W^{2,p}}+\|f\|_{W^{1,p}}+\|g\|_{L^p}+\|w_{in}\|_{W^{1,p}({\Gamma_{in}})}),
\end{equation}
where $\epsilon$ is a sufficiently small constant. Since (\ref{Lin-sys}) has exactly 
the same main terms as the linear system in \cite{GJZ2014}, one 
checks easily that the estimate (\ref{w2p-key}) still holds true for the system 
(\ref{Lin-sys}) by adapting the constructions developed in \cite{GJZ2014}.  It's 
immediately to get the $W^{2,p}$ estimates by using 
$(\ref{w2p-key})$.
\begin{lemma}
 For any $p\in(2,\infty)$, assume that $({\bf v},w)$ be a solution of (\ref{Lin-sys}) with $(f,g,\tilde B,w_{in})\in  W^{1,p}(\Omega)\times L^p(\Omega)\times W^{1-\frac{1}{p},p}(\Gamma_0)\times W^{1,p}(\Gamma_{in})$, then we have
\begin{equation}\label{final-w2p}
\begin{aligned}
\|w\|_{W^{1,p}(\Omega)}&+\|\partial_{z_1}w\|_{W^{1,p}(\Omega)}+\|{\bf v}\|_{W^{2,p}(\Omega)}\\
&\leq C(\|f\|_{W^{1,p}}+\|g\|_{L^p}+\|\tilde B\|_{W^{1-\frac{1}{p},p}(\Gamma_{0})}+\|w_{in}\|_{W^{1,p}({\Gamma_{in}})})
\end{aligned}
\end{equation}
\end{lemma}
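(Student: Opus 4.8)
The plan is to run the $L^{p}$ elliptic theory for the Lam\'e part of (\ref{Lin-sys}) and to absorb the (small) feedback from the density, using the $H^{1}$ estimate of Lemma \ref{H1-est} to supply the genuinely lower--order control. First I freeze the transport and zeroth--order terms in the momentum equation and regard $\mathbf v$ as a solution of the strongly elliptic Lam\'e system
\[
-\mu\Delta_z\mathbf v-(\mu+\nu)\nabla\operatorname{div}_z\mathbf v
= g-(\bar U\circ\psi_{\bar{\mathbf v}})\partial_{z_1}\mathbf v-\mathbf v\cdot\nabla_z U_0-\gamma\nabla_z w
\quad\text{in }\tilde\Omega,
\]
with the Dirichlet condition on $\tilde\Gamma_{\mathrm{in}}\cup\tilde\Gamma_{\mathrm{out}}$ and the Navier condition $2\mu\,\mathbf n\cdot D_z(\mathbf v)\cdot\tau+\alpha\,\mathbf v\cdot\tau=\tilde B$ on $\tilde\Gamma_0$. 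On the square these mixed boundary conditions meet only at right--angle corners, and the $W^{2,p}$ a priori estimate for exactly these main terms --- which coincide with those of \cite{GJZ2014} --- yields
\[
\|\mathbf v\|_{W^{2,p}}\le C\big(\|g\|_{L^p}+\|\mathbf v\|_{W^{1,p}}+\|\nabla_z w\|_{L^p}+\|\tilde B\|_{W^{1-\frac1p,p}(\tilde\Gamma_0)}\big),
\]
the lower--order terms $\bar U\partial_{z_1}\mathbf v$ and $\mathbf v\cdot\nabla_z U_0$ having been dropped into $\|\mathbf v\|_{W^{1,p}}$.

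The heart of the argument is to control $\|\nabla_z w\|_{L^p}$. Since $\bar U+\bar{\mathbf v}^{(1)}$ stays bounded below by a positive constant, the mass equation gives $\partial_{z_1}w=(f-\operatorname{div}_z\mathbf v)/((\bar U+\bar{\mathbf v}^{(1)})\circ\psi_{\bar{\mathbf v}})$, so $\|\partial_{z_1}w\|_{L^p}\le C(\|f\|_{L^p}+\|\mathbf v\|_{W^{1,p}})$; for the tangential derivative I invoke (\ref{w2p}) --- whose hypotheses are met because $W^{1-\frac1p,p}(\tilde\Gamma_0)\hookrightarrow H^{\frac12,2}(\tilde\Gamma_0)$ for $p>2$ --- followed by (\ref{w2p-key}), which gives $\|\operatorname{curl}\partial_{z_2}\mathbf v\|_{L^p}\le\|\operatorname{curl}\mathbf v\|_{W^{1,p}}\le\varepsilon\|\mathbf v\|_{W^{2,p}}+C(\text{data})$ with $\varepsilon$ as small as we wish once $\|\bar{\mathbf v}\|_{2,p}$ is small. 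Plugging these into the elliptic estimate and using Ehrling's inequality $\|\mathbf v\|_{W^{1,p}}\le\delta\|\mathbf v\|_{W^{2,p}}+C_\delta\|\mathbf v\|_{L^2}$ together with $\|\mathbf v\|_{L^2}\le\|\mathbf v\|_{H^1}\le C(\text{data})$ from (\ref{L2-estm}) (here one also uses $L^p\hookrightarrow V^{*}$), I arrive at
\[
\|\mathbf v\|_{W^{2,p}}\le C(\varepsilon+\delta)\|\mathbf v\|_{W^{2,p}}+C\big(\|f\|_{W^{1,p}}+\|g\|_{L^p}+\|\tilde B\|_{W^{1-\frac1p,p}(\tilde\Gamma_0)}+\|w_{in}\|_{W^{1,p}(\tilde\Gamma_{in})}\big),
\]
and for $\varepsilon,\delta$ small the first term is absorbed, giving $\|\mathbf v\|_{W^{2,p}}\le C(\text{data})$.

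Once $\|\mathbf v\|_{W^{2,p}}$ is controlled by the data, the density estimates follow. The derivatives $\partial_{z_1}w,\partial_{z_2}w\in L^p$ are precisely the quantities bounded in the previous step, and $\|w\|_{L^p}$ comes from the explicit representation (\ref{def-of-S}) with $v=f-\operatorname{div}_z\mathbf v$ (the same computation as for (\ref{estm-of-dsty}), now with exponent $p$). For the extra regularity $\partial_{z_1}w\in W^{1,p}$ I differentiate the mass equation: $\partial_{z_1}^2w$ is expressed through $\partial_{z_1}\operatorname{div}_z\mathbf v$, $\partial_{z_1}f$ and the small coefficients, while $\partial_{z_1}\partial_{z_2}w$ is given by (\ref{w2p-3}) in terms of $\operatorname{div}_z\partial_{z_2}\mathbf v$, $\partial_{z_2}f$ and $\partial_{z_1}w$, all already estimated. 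Collecting the bounds yields (\ref{final-w2p}).

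The step I expect to be the main obstacle is the circular coupling among $\|\mathbf v\|_{W^{2,p}}$, $\|\nabla_z w\|_{L^p}$ and $\|\operatorname{curl}\mathbf v\|_{W^{1,p}}$: the elliptic estimate needs the full $\nabla_z w$, the transport equation cheaply produces only $\partial_{z_1}w$, and the missing tangential derivative is recovered through the vorticity at the price of a small multiple of $\|\mathbf v\|_{W^{2,p}}$. Making the loop closable hinges on the smallness of $\bar{\mathbf v}$ (so that $\varepsilon$ in (\ref{w2p-key}) and the coefficient perturbations $E_{ij}$ are small) and on the a priori $H^{1}$ bound of Lemma \ref{H1-est}, which handles the term left over after Ehrling's inequality. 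A secondary technical point is the validity of the $W^{2,p}$ elliptic estimate for the Lam\'e operator with mixed Dirichlet--Navier data at the corners of the square, which is exactly where the constructions of \cite{GJZ2014} are used.
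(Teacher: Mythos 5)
Your proposal is correct and follows essentially the same route as the paper: the standard $W^{2,p}$ Lam\'e estimate with $\nabla_z w$ on the right, the mass equation to dispose of $\partial_{z_1}w$, the combination of (\ref{w2p}) and (\ref{w2p-key}) to recover $\partial_{z_2}w$ at the cost of a small multiple of $\|\mathbf v\|_{W^{2,p}}$, absorption by interpolation, and the symmetric extension of Lemma \ref{diff-quo} to handle the boundary. Your closing of the loop via Ehrling's inequality and the $H^1$ bound of Lemma \ref{H1-est} is just a slightly more explicit version of the paper's ``after using interpolation inequalities.''
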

\begin{proof}
The same extension introduced in the proof of Lemma \ref{diff-quo} shows that the 
boundary estimates can  be converted  into the interior one, hence we only 
demonstrate the interior estimates for the convenience.
By standard $W^{2,p}$ estimates of elliptic system, we have
\begin{equation}
\|{\bf v}\|_{W^{2,p}(\Omega)}\leq C(\|{\bf v}\|_{W^{1,p}}+\|g\|_{L^p}+\|\tilde 
B\|_{W^{1-\frac{1}{p},p}(\Gamma_{0})}+\|\nabla w\|_{L^p})
\end{equation}
By virtue of mass equation in (\ref{Lin-sys}) we have
\begin{equation}
\|{\bf v}\|_{W^{2,p}(\Omega)}\leq C(\|f\|_{W^{1,p}}+\|g\|_{L^p}+\|\tilde B\|_{W^{1-\frac{1}{p},p}(\Gamma_{0})}+\|{\bf v}\|_{W^{1,p}}+\|\partial_{z_2} w\|_{L^p})
\end{equation}
which combing with the estimates (\ref{w2p}) and (\ref{w2p-key}) after using interpolation inequalities gives the results.
\end{proof}
\section{Solution of linear system}
In this section, the existence of weak solution to the system (\ref{Lin-sys}) is given by Galerkin method. Having the weak solution at hand, we can show easily that this solution is also strong if the data has the appropriate regularity.
\subsection{Weak solution}
\begin{definition}
We call $(w,{\bf v})$ is a weak solution of (\ref{Lin-sys}) if for any $\phi\in V$ 
there holds
\begin{equation}\label{def-weak-v}
\begin{aligned}
\int_{\Omega}[\bar U\partial_{z_1} &w +{{\bf v}\cdot\nabla U_0}]\phi dz -\gamma\int_{\Omega}w\operatorname{div}\phi dz-\int_{\Gamma_0}\alpha({\bf v}\cdot\tau)(\phi\cdot\tau) ds\\
&+\int_{\Omega}(2\mu D({\bf v}):\nabla \phi+\nu\operatorname{div}{\bf v}\operatorname{div}\phi) dz=\int_{\Omega}g\phi dz+\int_{\Gamma_0}\tilde B(\phi\cdot\tau) ds.
\end{aligned}
\end{equation}
and  for any $\eta\in \tilde C^{\infty}(\Omega)$, there holds
\begin{equation}\label{def-weak-w}
\begin{aligned}
-\int_{\Omega} \eta\partial_{z_1}(\bar U +\bar{\bf v}^{(1)})wdz&-\int_{\Omega} \partial_{z_1}\eta(\bar U +\bar{\bf v}^{(1)}) wdz\\
&=\int_{\Omega} [f-\operatorname{div}{\bf v}]\eta dz+\int_{\Gamma_{in}}(\bar U +\bar{\bf v}^{(1)})w_{in}\eta ds
\end{aligned}
\end{equation}
where $\eta \in\tilde C^{\infty}(\Omega)$ means that $\eta\in C^{\infty}(\Omega)$ and $\eta|_{\Gamma_{out}}=0$.
\end{definition}
we want to apply the Galerkin method to prove the existence of weak solution. To 
this end, introduce an orthonormal basis of $\{\omega_k\}\subset V$ and finite 
dimensional subspace $V^N=\{\Sigma_{i=1}^Nc_i\omega_i:c_i\in \mathbb{R}\}$. We look 
for the approximate velocity field of the form ${\bf v}^N=\Sigma_{i=1}^N 
c_i\omega_i$.
Note that the solution of the continuity equation has been given by the operator $S$ defined in (\ref{def-of-S}).Now we proceed with the Galerkin scheme.Taking $g=g^N, {\bf v}={\bf v}^N=\Sigma_{i=1}^N c_i\omega_i, \phi=\omega_k, k=1,2...N$ and $w=w^N=S(f^N-\divg {\bf v}^N)$, where $f^N,g^N$ are orthogonal projections of $f,g$ on $V^N$. We arrive at a system of N equations
\begin{equation}\label{app-sys}
  B^N({\bf v}^N,\omega_k)=0,\quad k=1,2...N,
\end{equation}
where $B^N:V^N\times V^N\rightarrow \mathbb{R}$ is defined as
\begin{equation*}
\begin{aligned}
    B^N({\bf v}^N,\phi)&=\int_{\Omega}\{{\bf v}^N\bar U\partial_{z_1}\phi+({\bf 
    v}^N\cdot\nabla\bar U_0)\cdot\phi+2\mu D({\bf v}^N):\nabla\phi+\nu\divg {\bf 
    v}^N\divg \phi\}dz\\
    &-\int_{\Omega}g^N\cdot \phi dz-\gamma\int_{\Omega}S(f^N-\divg {\bf v}^N)\divg 
    \phi dz\\
    &+\int_{\Gamma_0}[\alpha({\bf v}^N\cdot\tau)-\tilde B](\phi\cdot \tau)d\sigma.
\end{aligned}
\end{equation*}
Now, if ${\bf v}^N$ satisfies (\ref{app-sys}) for $k=1,2...N$, then the pair $({\bf v}^N,w^N)$ satisfies (\ref{Lin-sys}) for $(\phi,\eta)\in V\times \tilde C^{\infty}$ with $w=w^N$. we call such a pair an approximation solution.  In order to show the existence of approximation solution, we apply the following results for finite dimensional Hilbert space\cite{NS2004}.
\begin{lemma}
  Let X be a finite dimensional Hilbert space and let $P:X\rightarrow X$ be a continuous operator satisfying
  \begin{equation*}
    \exists M>0:\,(p(\xi),\xi)>0\,\,for\,\,\|\xi\|=M,
  \end{equation*}
  then there is at leats one $\xi^\star$ such that $\|\xi^\star\|\leq M$ and $P(\xi^\star)=0$.
\end{lemma}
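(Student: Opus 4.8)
The plan is to argue by contradiction and reduce the claim to the classical Brouwer fixed point theorem. Suppose, contrary to the assertion, that $P(\xi)\neq 0$ for every $\xi$ in the closed ball $\overline{B}_M:=\{\xi\in X:\|\xi\|\leq M\}$. Since $X$ is finite dimensional, $\overline{B}_M$ is compact, and $\xi\mapsto\|P(\xi)\|$ is continuous and, by assumption, strictly positive on $\overline{B}_M$; hence it is bounded below there by a positive constant. Consequently the map
\[
\Phi:\overline{B}_M\to \overline{B}_M,\qquad \Phi(\xi):=-\,M\,\frac{P(\xi)}{\|P(\xi)\|},
\]
is well defined and continuous, and plainly $\|\Phi(\xi)\|=M$, so $\Phi$ indeed maps $\overline{B}_M$ into itself. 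As $\overline{B}_M$ is a nonempty compact convex subset of a finite dimensional space, Brouwer's theorem applies and produces a point $\xi_0\in\overline{B}_M$ with $\Phi(\xi_0)=\xi_0$.

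Next I would extract the contradiction from the coercivity hypothesis. From $\Phi(\xi_0)=\xi_0$ we get $\|\xi_0\|=\|\Phi(\xi_0)\|=M$, so $\xi_0$ lies on the sphere $\|\xi\|=M$ where the sign condition $(P(\xi_0),\xi_0)>0$ is assumed to hold. On the other hand, taking the inner product of the identity $\xi_0=\Phi(\xi_0)$ with $\xi_0$ yields
\[
M^2=(\xi_0,\xi_0)=(\Phi(\xi_0),\xi_0)=-\frac{M}{\|P(\xi_0)\|}\,(P(\xi_0),\xi_0)<0,
\]
which is absurd. Hence the assumption that $P$ is nonvanishing on $\overline{B}_M$ is untenable, i.e. there exists $\xi^\star\in\overline{B}_M$ with $P(\xi^\star)=0$, which is exactly the claim.

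I do not expect any genuine obstacle in this argument; the only points deserving a word of care are the well-definedness and continuity of $\Phi$ — both immediate once $P$ is assumed nonzero on the compact ball — and the fact that in finite dimensions $\overline{B}_M$ is a legitimate domain for Brouwer's theorem. One could alternatively phrase the proof through topological degree: the hypothesis $(P(\xi),\xi)>0$ on $\|\xi\|=M$ shows that the homotopy $(1-t)\,\mathrm{id}+tP$ does not vanish on that sphere for $t\in[0,1]$, whence $\deg(P,B_M,0)=\deg(\mathrm{id},B_M,0)=1\neq 0$ and a zero of $P$ in $B_M$ exists. The Brouwer argument above is the more elementary route, and is the one I would record; it is also the version used in \cite{NS2004}.
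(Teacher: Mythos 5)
Your argument is correct. Note that the paper itself offers no proof of this lemma at all --- it is quoted as a known tool with a reference to \cite{NS2004} --- so there is nothing to compare against; your contradiction argument via Brouwer's fixed point theorem (map $\xi\mapsto -M\,P(\xi)/\|P(\xi)\|$, extract a fixed point on the sphere, and contradict the sign condition $(P(\xi),\xi)>0$ there) is exactly the standard proof of this ``acute angle'' lemma, and both it and the degree-theoretic variant you sketch are sound.
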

The existence of approximation solution is guaranteed by the following lemma.
\begin{lemma}\label{fixpt-lem}
Let $f,g\in L^2(\Omega)$,\,$w_{in}\in L^2(\Gamma_{in}),\tilde B\in L^2(\Gamma_0)$. If $\|\bar{\bf v}\|_{W^{2,p}}$ is small enough and the viscous coefficients $\mu>\frac{1}{2\pi^2}$, then there exists ${\bf v}^{N}\in V^{N}$ fulfills (\ref{app-sys}). Furthermore, there exists a positive constant M indecent of N such that
\begin{equation}\label{uni-est-app}
\|{\bf v}^{N}\|_{H^1}\leq M.
\end{equation}
\end{lemma}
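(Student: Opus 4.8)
\textbf{Proof proposal for Lemma \ref{fixpt-lem}.}
The plan is to apply the finite-dimensional fixed point lemma quoted just above to the operator $P:V^N\to V^N$ defined by $(P({\bf v}^N),\phi)_{V^N}=B^N({\bf v}^N,\phi)$ for all $\phi\in V^N$, where $(\cdot,\cdot)_{V^N}$ denotes the inner product inherited from $V$. First I would check that $P$ is well defined and continuous: each term in $B^N({\bf v}^N,\phi)$ is (affine-)linear and continuous in $\phi$, so $\phi\mapsto B^N({\bf v}^N,\phi)$ is a bounded linear functional on the finite-dimensional space $V^N$ and is represented by a unique element $P({\bf v}^N)$; continuity in ${\bf v}^N$ follows since the only nonlinear dependence enters through the bounded affine operator $S$ (bounded by the Remark following the estimate (\ref{estm-of-dsty})) and through the fixed coefficient $(\bar U+\bar{\bf v}^{(1)})\circ\psi_{\bar{\bf v}}$, which depends only on the frozen field $\bar{\bf v}$, not on ${\bf v}^N$.

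The heart of the argument is the coercivity-type bound $(P(\xi),\xi)>0$ for $\|\xi\|=M$ with $M$ large. Taking $\phi={\bf v}^N$ in $B^N$ and writing $w^N=S(f^N-\divg{\bf v}^N)$, I would reproduce exactly the computation leading to (\ref{L2-1})--(\ref{L2-key-1}) in the proof of Lemma \ref{H1-est}, but now with the finite-dimensional solution: integrating by parts in the convection term produces $-\tfrac12\int_\Omega\partial_{z_1}(\bar U\circ\psi_{\bar{\bf v}})|{\bf v}^N|^2$, which is controlled by $E(\|\bar{\bf v}\|_{2,p})\|{\bf v}^N\|_{L^2}^2$; the term $\int_\Omega({\bf v}^N\cdot\nabla U_0)\cdot{\bf v}^N$ is bounded by $\tfrac12\|\nabla{\bf v}^N\|_{L^2}^2$ via Poincar\'e; the viscous terms are bounded below by $\mu\pi^2\|\nabla{\bf v}^N\|_{L^2}^2$ through the Korn inequality (\ref{L2-2}); and the coupling term $-\gamma\int_\Omega w^N\divg{\bf v}^N$ is rewritten through the mass relation (\ref{L2-3}), where the boundary integral on $\Gamma_{out}$ is nonnegative by smallness of $\bar{\bf v}$ and the remaining pieces are absorbed using (\ref{estm-of-dsty}) to bound $\|w^N\|_{L^\infty(L^2)}$ by $\|\nabla{\bf v}^N\|_{L^2}$, $\|f\|_{L^2}$ and $|w_{in}|_{L^2(\Gamma_{in})}$. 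Collecting everything, and using $\mu>\tfrac1{2\pi^2}$ together with the smallness of $\|\bar{\bf v}\|_{2,p}$ so that the coefficient $\mu\pi^2-\tfrac12-\tfrac{E}{2}$ is a positive constant $c_0$, one gets
\begin{equation*}
(P({\bf v}^N),{\bf v}^N)\geq c_0\|\nabla{\bf v}^N\|_{L^2}^2 - C\big(\|f\|_{L^2}+\|g\|_{V^*}+|\tilde B|_{L^2(\Gamma_0)}+|w_{in}|_{L^2(\Gamma_{in})}\big)\|\nabla{\bf v}^N\|_{L^2} - C|w_{in}|_{L^2(\Gamma_{in})}^2.
\end{equation*}
Hence, after equivalence of $\|\cdot\|_{V^N}$ with $\|\nabla\cdot\|_{L^2}$ (Poincar\'e), the right-hand side is strictly positive once $\|{\bf v}^N\|_{V^N}=M$ for $M$ chosen large depending only on the data norms but not on $N$. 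The fixed point lemma then yields ${\bf v}^N\in V^N$ with $P({\bf v}^N)=0$, i.e. (\ref{app-sys}), and the same inequality read at the solution gives the uniform bound (\ref{uni-est-app}) with this $M$.

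The main obstacle is to make the coercivity genuinely uniform in $N$: one must be careful that the constants entering the Korn and Poincar\'e inequalities and the bound on $S$ are intrinsic to $\Omega$ and the frozen field $\bar{\bf v}$, and that the quadratic "bad" term $\tfrac12\|\nabla{\bf v}^N\|_{L^2}^2$ coming from $\int({\bf v}^N\cdot\nabla U_0)\cdot{\bf v}^N$ together with the small term $\tfrac E2\|\nabla{\bf v}^N\|_{L^2}^2$ is strictly dominated by $\mu\pi^2\|\nabla{\bf v}^N\|_{L^2}^2$ — this is precisely where the hypotheses $\mu>\tfrac1{2\pi^2}$ and smallness of $\|\bar{\bf v}\|_{2,p}$ are used. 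A secondary technical point is handling the $\Gamma_{out}$ boundary term in (\ref{L2-3}): it must be shown nonnegative (not merely bounded), which again follows from $(\bar U+\bar{\bf v}^{(1)})\circ\psi_{\bar{\bf v}}\geq c>0$ under the smallness assumption, so it can simply be discarded on the coercive side.
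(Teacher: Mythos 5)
Your proposal is correct and follows essentially the same route as the paper: the operator $P^N$ is the Riesz/basis representation of $B^N(\xi^N,\cdot)$ on $V^N$, coercivity is obtained by repeating the computation of Lemma \ref{H1-est} with $w^N=S(f^N-\divg\xi^N)$ (Korn plus Poincar\'e against the shear term, smallness of $\bar{\bf v}$, the sign of the $\Gamma_{out}$ boundary integral, and the bound (\ref{estm-of-dsty}) on $S$), and the finite-dimensional fixed point lemma then yields an approximate solution with an $N$-independent bound. No gaps.
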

\begin{proof}
Define $P^N:V^N\rightarrow V^N$ as
\begin{equation}
  P^N(\xi^N)=\Sigma_{k=1}^NB^N(\xi^N,\omega_k)\omega_k\,\,\text{for}\,\,\xi^N\in V^N.
\end{equation}
According to Lemma\, \ref{fixpt-lem}, we need to show that $(p(\xi^N),\xi^N)>0$ on some sphere in $V^N$. Since $B^N(\cdot,\cdot)$ is linear with respect to the second variable, we have
\begin{equation}\label{bili-est}
\begin{aligned}
(p^N(\xi^N),\xi^N)=B^N(\xi^N,\xi^N)&=2\mu\int_{\Omega} |D(\xi^N)|^2dz+\nu\int_{\Omega}|\divg\xi^N|^2dz\\
&+\int_{\Omega} (\xi^N\cdot\nabla U_0)\cdot \xi^Ndz-\int_{\Omega} g^N\cdot \xi^N \\
&-\frac{1}{2}\int_{\Omega} \partial_{z_1}(\bar U\circ\psi_{\bar\xi^N})|\xi^N|^2 dz-\int_{\Gamma_{0}}\tilde B\xi^N\cdot\tau\\
&+\int_{\Gamma_{0}} \alpha|\xi^N|^2 ds-\gamma\int_{\Omega} S(f^N-\divg\xi^N)\divg\xi^Ndz.
\end{aligned}
\end{equation}
Due to Korn inequality similar as in the proof of Lemma \ref{H1-est}, we only need to find a bound on the lats term in  (\ref{bili-est}). Denote $\eta^N=S(f^N-\divg\xi^N)$, then
\begin{equation*}
  \int_\Omega\eta^N\divg\xi^Ndz=\int_\Omega f^N\eta^Ndz-\int_\Omega(\bar U+\bar{\bf v}^{(1)})\partial_{z_1}\eta^N\eta^Ndz,
\end{equation*}
the first term is controlled by
\begin{equation}
  \int_\Omega f^N\eta^Ndz\leq \|f\|_{L^2}\|\eta^N\|_{L^2}\leq\|f\|_{L^2}(\|f\|_{L^2}+\|\xi^N\|_{H^1}+|w_{in}|_{L^2})
\end{equation}
where we have used (\ref{estm-of-dsty}).
With the second integral we have
\begin{equation*}
  \begin{aligned}
    -\int_\Omega(\bar U+\bar{\bf v}^{(1)})\partial_{z_1}\eta^N\eta^Ndz&=\frac{1}{2}\int_{\Gamma_{in}}(\bar U+\bar{\bf v}^{(1)})(\eta^N)^2d\sigma-\frac{1}{2}\int_{\Gamma_{out}}(\bar U+\bar{\bf v}^{(1)})(\eta^N)^2d\sigma\\
    &+\frac{1}{2}\int_{\Omega}\partial_{z_1}(\bar U+\bar{\bf v}^{(1)})(\eta^N)^2dz\\
    &\leq C|w_{in}|^2_{L^2}+E\|\eta^N\|^2_{L^2}.
  \end{aligned}
\end{equation*}
Combine all those estimates together, we get
\begin{equation*}
  (p^N(\xi^N)\geq C(\|\xi^N\|^2_{H^1}-D\|\xi^N\|_{H^1}-D^2),
\end{equation*}
where $D=\|f\|_{L^2}+\|g\|_{V^{*}}+\|\tilde B\|_{L^2(\Gamma_{0})}+\|w_{in}\|_{L^2({\Gamma_{in}})}$. Hence there exists a constant $M$ such that $p^N(\xi^N)> 0$ for some $\|\xi^N\|=M$. Applying Lemma \ref{fixpt-lem} we conclude that there is a $\xi^\star$ such that $P^N(\xi^\star)=0$ and $\|\xi^\star\|\leq M.$ Finally, since $\{\omega_k\}$ is the basis of $V^N$, $P^N(\xi^\star)=0$ implies $B^N(\xi^\star,\omega_k)=0$, that is to say $\xi^\star$ is an approximation solution.
\end{proof}
Since the system is linear, the uniform estimate (\ref{uni-est-app}) immediately gives the existence of weak solution. The results is in the following.
\begin{lemma}
  Let $f,g\in L^2(\Omega)$,\,$w_{in}\in L^2(\Gamma_{in}),\tilde B\in L^2(\Gamma_0)$. If $\|\bar{\bf v}\|_{W^{2,p}}$ is small enough and the viscous coefficients $\mu>\frac{1}{2\pi^2}$, then the system (\ref{Lin-sys}) has a unique weak solution  $({\bf v},w)\in H^{1}\times L^{\infty}(L^2)$,  which satisfies the estimate (\ref{L2-estm}).
\begin{proof}
  The above lemma shows that the approximation solution ${\bf v}^N$ satisfy
\begin{equation*}
  \|{\bf v}^N\|_{H^1}\leq M
\end{equation*}
which combined with (\ref{estm-of-dsty}) gives
\begin{equation*}
  \|{\bf v}^N\|_{H^1}+\|w\|_{L^\infty (L^2)}\leq M.
\end{equation*}
Hence there exists a pair $({\bf v},w)\in H^1\times L^\infty(L^2)$ such that
\begin{equation*}
  {\bf v}^N\rightharpoonup{\bf v}\quad \text{in}\,H^1 ,
\end{equation*}
and
\begin{equation*}
  w^N\rightharpoonup w\quad \text{in}\,L^\infty (L^2).
\end{equation*}
 Since the system (\ref{Lin-sys}) is linear, passing to the limit in (\ref{def-weak-v}) for ${{\bf v}^N,w^N}$, it follows that ${\bf v}$ satisfy (\ref{def-weak-v}) with $w$. Similarly, taking the limit in $(\ref{def-weak-w})$ it is easy to verify $w=S(f-\divg{\bf v})$, the boundary condition on $w$ is guaranteed to hold by the definition of the operator $S$. It's obvious that $({\bf v},w)$ satisfies the estimate (\ref{L2-estm}). The proof is thus complete.
\end{proof}
\end{lemma}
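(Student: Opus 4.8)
The plan is to carry out the Galerkin scheme already set up above: construct finite-dimensional approximations $({\bf v}^N,w^N)$, bound them uniformly in $N$, extract a weak limit, and use linearity to pass to the limit in the two weak formulations. First I would invoke Lemma~\ref{fixpt-lem}, which under the present hypotheses produces ${\bf v}^N\in V^N$ solving (\ref{app-sys}) together with the bound $\|{\bf v}^N\|_{H^1}\le M$ for a constant $M$ independent of $N$; this is exactly where the smallness of $\|\bar{\bf v}\|_{W^{2,p}}$ and the condition $\mu>\frac{1}{2\pi^2}$ enter, through the Korn inequality (\ref{L2-2}) as in the proof of Lemma~\ref{H1-est}. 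Setting $w^N:=S(f^N-\divg{\bf v}^N)$ and applying the estimate (\ref{estm-of-dsty}) for the transport solution operator $S$, one obtains $\|w^N\|_{L^\infty(L^2)}\le C(|w_{in}|_{L^2(\Gamma_{in})}+\|f\|_{L^2}+\|{\bf v}^N\|_{H^1})\le M'$ uniformly in $N$; moreover, since $\partial_{z_1}w^N=(f^N-\divg{\bf v}^N)/((\bar U+\bar{\bf v}^{(1)})\circ\psi_{\bar{\bf v}})$ is read off directly from the formula (\ref{def-of-S}), the family $\partial_{z_1}w^N$ is bounded in $L^2(\Omega)$ too.

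With these uniform bounds I would pass to a subsequence along which ${\bf v}^N\rightharpoonup{\bf v}$ weakly in $H^1(\Omega)$, $w^N\rightharpoonup w$ weakly-$*$ in $L^\infty(L^2)$, and $\partial_{z_1}w^N\rightharpoonup\partial_{z_1}w$ weakly in $L^2(\Omega)$. The coefficients appearing in (\ref{def-weak-v}) and (\ref{def-weak-w}) --- namely $\bar U\circ\psi_{\bar{\bf v}}$, $\bar{\bf v}^{(1)}\circ\psi_{\bar{\bf v}}$ and $\nabla U_0$ --- are fixed $L^\infty$ functions, so the forms involved are continuous for these modes of convergence; hence, fixing a test function $\phi\in V^m$ and letting $N\to\infty$ in (\ref{app-sys}), the limit pair $({\bf v},w)$ satisfies (\ref{def-weak-v}) for every $\phi\in\bigcup_mV^m$, and since this union is dense in $V$, the identity (\ref{def-weak-v}) holds for all $\phi\in V$. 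For the mass equation I would use that $S\colon L^2(\Omega)\to L^2(\Omega)$ is an affine bounded map and therefore weakly sequentially continuous: from $f^N\to f$ strongly in $L^2$ (orthogonal projection) and $\divg{\bf v}^N\rightharpoonup\divg{\bf v}$ weakly in $L^2$ it follows that $w^N=S(f^N-\divg{\bf v}^N)\rightharpoonup S(f-\divg{\bf v})$, so $w=S(f-\divg{\bf v})$; this is precisely the weak form (\ref{def-weak-w}), and the boundary value $w=w_{in}$ on $\Gamma_{in}$ is built into the definition (\ref{def-of-S}) of $S$.

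Finally, since $({\bf v},w)\in H^1\times L^\infty(L^2)$ is now a genuine weak solution of (\ref{Lin-sys}), Lemma~\ref{H1-est} applies and yields the estimate (\ref{L2-estm}). Uniqueness is immediate from linearity: the difference of two solutions is a weak solution with data $f=g=\tilde B=w_{in}=0$, and (\ref{L2-estm}) forces it to vanish identically. The only points that need a little care are the density of $\bigcup_mV^m$ in $V$, so that the limiting weak formulation is tested against all of $V$, and the weak continuity of the operator $S$ under the limit; since the system is linear with frozen coefficients and the substantial work --- the Korn-type coercivity requiring $\mu>\frac{1}{2\pi^2}$, the transport estimate for $S$, and the finite-dimensional fixed-point step --- has already been carried out, I do not anticipate a real obstacle at this stage.
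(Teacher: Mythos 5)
Your proposal follows essentially the same route as the paper: invoke the Galerkin existence lemma for the uniform $H^1$ bound on ${\bf v}^N$, combine with the transport estimate (\ref{estm-of-dsty}) for $w^N=S(f^N-\divg{\bf v}^N)$, extract weak limits, and pass to the limit in (\ref{def-weak-v}) and (\ref{def-weak-w}) using linearity, with uniqueness following from the linear estimate (\ref{L2-estm}). You in fact supply a few details the paper leaves implicit (density of $\bigcup_m V^m$ in $V$, weak sequential continuity of $S$, and the explicit uniqueness argument), but the substance is identical.
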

\subsection{Strong solution}
In this section, we will show that the weak solution given above is also strong by using symmetric extension methods.
\begin{lemma}
Let $f\in W^{1,p}(\Omega),g\in L^p(\Omega),w_{in}\in W^{1,p}(\Gamma_{in})\tilde B\in 
W^{1-\frac{1}{p},p}(\Gamma_0)$. If $\|\bar{\bf v}\|_{W^{2,p}}$ is small enough and 
the  viscous coefficients $\mu>\frac{1}{2 \pi^2}$ , then the system (\ref{Lin-sys}) 
has a unique strong solution  $({\bf v},w)\in W^{2,p}\times W^{1,p}$,  which 
satisfies the estimate (\ref{final-w2p}).
\end{lemma}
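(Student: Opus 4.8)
The plan is to obtain the strong solution by upgrading the regularity of the weak solution constructed in the previous subsection, and then to read off the quantitative bound (\ref{final-w2p}) from the a priori estimates of Section~3. First I would observe that, since $p>2$ and $\Omega$ is a bounded planar domain, the hypotheses $(f,g,\tilde B,w_{in})\in W^{1,p}(\Omega)\times L^{p}(\Omega)\times W^{1-\frac1p,p}(\Gamma_0)\times W^{1,p}(\Gamma_{in})$ in particular place the data in $H^{1}(\Omega)\times L^{2}(\Omega)\times H^{\frac12,2}(\Gamma_0)\times H^{1}(\Gamma_{in})$; hence the unique weak solution $(\mathbf{v},w)\in H^{1}(\Omega)\times L^{\infty}(L^{2})(\Omega)$ of (\ref{Lin-sys}) furnished by the preceding lemma is at our disposal, together with the estimate (\ref{L2-estm}). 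Uniqueness within the class $W^{2,p}\times W^{1,p}$ is then immediate: the difference of two such solutions solves (\ref{Lin-sys}) with vanishing data and lies in $H^{1}\times L^{\infty}(L^{2})$, so (\ref{L2-estm}) forces it to vanish.

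It remains to bootstrap the regularity of $(\mathbf{v},w)$. \emph{Step 1: from $H^{1}\times L^{\infty}(L^{2})$ to $H^{2}\times H^{1}$.} The difference-quotient computation proving Lemma~\ref{diff-quo} uses only that $(\mathbf{v},w)$ is a weak solution and does not presuppose any $H^{2}$ regularity; combined with the symmetric extension across $\Gamma_0^{\pm}$ (even extension of $w$ and $v_1$, odd extension of $v_2$), which converts the near-boundary estimate into an interior one, and with the standard interior $L^{2}$ elliptic theory for the Lam\'e operator, it yields $(\mathbf{v},w)\in H^{2}(\Omega)\times H^{1}(\Omega)$ together with (\ref{H2-est}). \emph{Step 2: from $H^{2}\times H^{1}$ to $W^{2,p}\times W^{1,p}$.} With $(\mathbf{v},w)$ now in $H^{2}\times H^{1}$, taking $\curl$ of the momentum equation annihilates both the pressure term $\gamma\nabla w$ and the $\nabla\divg\mathbf{v}$ term, so $\curl\mathbf{v}$ solves a Laplace-type equation; invoking the construction of \cite{GJZ2014}, which applies since (\ref{Lin-sys}) has the same principal part as the system treated there, gives $\curl\mathbf{v}\in W^{1,p}(\Omega)$ with the bound (\ref{w2p-key}). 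Feeding this into the Bogovskii test-function argument produces $\partial_{z_2}w\in L^{p}$ with (\ref{w2p}), hence $\nabla w\in L^{p}$; the momentum equation then has an $L^{p}$ right-hand side, so $W^{2,p}$ elliptic regularity (again combined with the symmetric extension near $\Gamma_0$) gives $\mathbf{v}\in W^{2,p}$, and re-inserting into the mass equation yields $w,\partial_{z_1}w\in W^{1,p}$. To keep the bookkeeping honest---the curl estimate (\ref{w2p-key}) is phrased with $\|\mathbf{v}\|_{W^{2,p}}$ on its right-hand side, which is not yet known finite---I would carry out Step 2 on the Galerkin approximations $\mathbf{v}^{N}$, which are finite-dimensional hence smooth so that every quantity entering (\ref{w2p-key})--(\ref{w2p}) is a priori finite, obtain (\ref{final-w2p}) uniformly in $N$, and pass to the limit; equivalently one regularizes $(f,g,\tilde B,w_{in})$, solves with smooth data, and passes to the limit using the uniform bound. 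Once $(\mathbf{v},w)\in W^{2,p}\times W^{1,p}$ is established, (\ref{final-w2p}) is precisely the a priori estimate already proved.

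The main obstacle is Step 2, and within it the control of $\|\curl\mathbf{v}\|_{W^{1,p}}$ up to the inflow and outflow faces $\Gamma_{in}\cup\Gamma_{out}$: there the Dirichlet condition $\mathbf{v}=0$ translates into only partial information on $\curl\mathbf{v}$, and the normal behaviour of $\curl\mathbf{v}$ near these faces is not governed by a convenient boundary condition, so the interior Laplace estimate does not close by itself. This is exactly the point resolved by the delicate construction in \cite{GJZ2014}, and the work here is to verify that that construction transfers verbatim given the identical principal part; the secondary technical nuisance is the circular dependence noted above, which the approximation argument removes. All remaining steps---the symmetric extensions, the interpolation absorbing the terms $\delta\|\partial_{z_1}w\|_{L^p}$ and $\epsilon\|\mathbf{v}\|_{W^{2,p}}$ into the left-hand side, and the passage to the limit---are routine.
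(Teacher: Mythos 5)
Your proposal follows essentially the same route as the paper: take the weak solution from the Galerkin construction, bootstrap its regularity through the a priori estimates of Section~3 (difference quotients for $H^2\times H^1$, then the curl/Bogovskii machinery for $W^{2,p}\times W^{1,p}$), and use the symmetric extension across $\Gamma_0^{\pm}$ to handle the boundary. You are in fact more careful than the paper's own (very terse) proof, in particular in flagging and resolving the circularity in (\ref{w2p-key}) via approximation and in making the uniqueness argument explicit.
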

\begin{proof}
Since (\ref{Lin-sys}) is a linear system, the a priori estimates (\ref{final-w2p}) will deduce the regularity of the weak solution in the interior. In order deal with the singularity of the boundary at the junctions of $\Gamma_0$ with $\Gamma_{in}$ and $\Gamma_{out}$, we still apply the symmetric extension methods introduced in the proof of lemma (\ref{diff-quo}). Hence we can extend the weak solution on the negative values of $z_2$, and using the estimates (\ref{final-w2p}) show that the extended solution has the same regularity.
\end{proof}
\section{proof of main results}
According to the analysis in Section 4. We can define the solution operator of linear systems as
\begin{equation}
T:W^{2,p}(\Omega)\times W^{1,p}(\Omega)\rightarrow W^{2,p}(\Omega)\times W^{1,p}(\Omega)
\end{equation}
such that $({\bf v}^{n+1},w^{n+1})=T({\bf v}^n,w^n)$ is the solution of the linear system (\ref{Itrn-sys}). In the following we want to show that T is a contraction mapping in some proper subspace, hence $T$ has a unique solution of (\ref{Z-cordin-sys}).
\begin{lemma}
There is a positive constant $R>0$ such that the operator $T$ mapping $B_R$ into itself provided $D_0$ is small enough, where $$B_R=\{({\bf v},w)\in W^{2,p}(\Omega)\times W^{1,p}:\|{\bf v}\|_{W^{2,p}(\Omega)}+\|w\|_{W^{1,p}(\Omega)}\leq R \}.$$
\end{lemma}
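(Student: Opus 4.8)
The plan is to combine the linear a priori estimate (\ref{final-w2p}) with the nonlinear source bounds (\ref{est-soc-z}) and run a standard "ball into itself" argument, choosing first $R$ small (to absorb the super-linear terms) and then $D_0$ small (to handle the data). Fix $({\bf v}^n,w^n)\in B_R$. I would first check that the data of the linear system (\ref{Itrn-sys}) lie in the spaces required by the solvability lemma of Section 4: $f=\tilde F({\bf v}^n,w^n)\in W^{1,p}$, $g=\tilde G({\bf v}^n,w^n)\in L^p$, $w_{in}=\rho_{in}-1\in W^{1,p}(\Gamma_{in})$, and $\tilde B({\bf v}^n)=B-2\mu{\bf n}\cdot R({\bf v}^n,D)\cdot\tau\in W^{1-\frac1p,p}(\Gamma_0)$; moreover the coefficient field $\bar{\bf v}={\bf v}^n+\tilde{\bf u}$ obeys $\|{\bf v}^n+\tilde{\bf u}\|_{W^{2,p}}\le R+CD_0$, which is below the smallness threshold of the linear theory once $R$ and $D_0$ are small. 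Hence $({\bf v}^{n+1},w^{n+1})=T({\bf v}^n,w^n)$ is well defined and satisfies (\ref{final-w2p}).

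Next I would bound each data term. Directly from (\ref{diff-of-inti-dad}) and the construction of $\tilde{\bf u}$ one has $\|\tilde{\bf u}\|_{W^{2,p}}\le CD_0$, $\|B\|_{W^{1-\frac1p,p}(\Gamma_0)}\le D_0$, $\|\rho_{in}-1\|_{W^{1,p}(\Gamma_{in})}\le D_0$. The commutator term is controlled using the smallness (\ref{small-jaco}) of the Jacobian perturbation, $\|R({\bf v}^n,D)\cdot\tau\|_{W^{1-\frac1p,p}(\Gamma_0)}\le C\|{\bf E}\|_{W^{1,p}}\|{\bf v}^n\|_{W^{2,p}}\le E(R,D_0)\,R$, and $\|\tilde F\|_{W^{1,p}}+\|\tilde G\|_{L^p}$ is estimated by (\ref{est-soc-z}) as $C(R^2+R^3)+E(D_0)R+CD_0$. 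Feeding all of this into (\ref{final-w2p}) yields
\begin{equation*}
\|{\bf v}^{n+1}\|_{W^{2,p}}+\|w^{n+1}\|_{W^{1,p}}\le C_0(R^2+R^3)+C_0E(D_0)R+C_0D_0,
\end{equation*}
where $C_0$ depends only on $p,\mu,\nu,\alpha$ (and absorbs the $O(R^2+D_0R)$ commutator contribution).

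Finally I would fix the constants in the correct order. First choose $R>0$ small enough that $R$ lies below the smallness thresholds needed for (\ref{final-w2p}) and the Galerkin existence lemma to apply and that $C_0(R^2+R^3)\le R/2$; with $R$ now frozen, choose $D_0$ small enough (possible since $E(D_0)\to0$ as $D_0\to0$) that $C_0E(D_0)R+C_0D_0\le R/2$. Then the right-hand side above is $\le R$, so $T(B_R)\subset B_R$.

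The only delicate point — really the crux — is this order of quantifiers: $R$ must be chosen using only the structural constant $C_0$, so that the quadratic term $C_0R^2$ is genuinely absorbed into $R/2$ \emph{before} $D_0$ is shrunk; one must simultaneously verify that this same $R$ is compatible with the smallness hypotheses on $\bar{\bf v}$ underlying (\ref{final-w2p}) and Lemma \ref{fixpt-lem}, and that the transformation-induced quantities (the coefficients $\bar U\circ\psi_{\bar{\bf v}}$, the commutators $R(\cdot,D)$) remain controlled — all of which follow from (\ref{small-jaco})–(\ref{small-tilde}) once $R+D_0$ is small. Everything else is the routine bookkeeping of substituting (\ref{est-soc-z}) into (\ref{final-w2p}).
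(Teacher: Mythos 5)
Your proposal is correct and follows essentially the same route as the paper: substitute the source estimate (\ref{est-soc-z}) into the linear a priori bound (\ref{final-w2p}) and absorb the superlinear terms by smallness of $R$ and $D_0$ (the paper ties the two by taking $R\le 2E(D_0)$ with $E(D_0)\le \frac{1}{16C}$, whereas you fix $R$ first and then shrink $D_0$, but both orderings close the argument). Your additional checks on the data spaces, the boundary commutator term $R({\bf v}^n,D)$, and the smallness hypotheses on $\bar{\bf v}$ are details the paper leaves implicit.
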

\begin{proof}
By (\ref{est-soc-z}) and (\ref{final-w2p}) we have
\begin{equation}
\begin{aligned}
\|T&({\bf v}^{n},w^n)\|_{W^{2,p}\times W^{1,p}}=\|({\bf v}^{n+1},w^{n+1})\|_{{W^{2,p}\times W^{1,p}}}\\
&\leq C[(\|{\bf  v}^{n}\|_{2,p}+\|w^{n}\|_{1,p})^3+(\|{\bf  v}^{n}\|_{2,p}+\|w^{n}\|_{1,p})^2]\\
&+E(\|{\bf  v}\|_{W^{2,p}}+\|w\|_{W^{1,p}})+\|\tilde{\bf u}\|_{W^{2,p}}\\
&\leq C(R^3 + R^2)+E(D_0)
\end{aligned}
\end{equation}
for $R<1$, and hence for $R\leq 2E(D_0)$ with $E(D_0)\leq\frac{1}{16C}$, we have
$$\|({\bf v}^{n+1},w^{n+1})\|_{{W^{2,p}\times W^{1,p}}}\leq 2E(D_0),$$
the proof thus is completed.
\end{proof}
\begin{remark}
The same proof also shows that $\|\partial_{z_1}w^{n+1}\|_{1;p}\leq E(D_0)$,\, 
thanks to the second term in the r.h.s of (\ref{final-w2p}),
\end{remark}
In the next lemma we will show that $T$ is contraction.
\begin{lemma}
There is a constant $0<E_0<1$ independent of n,such that
\begin{equation}\label{diff-est}
\|{\bf v}^{n+1}-{\bf v}^{m+1}\|_{{2,p}:\Omega}+\|{w}^{n+1}-{w}^{m+1}\|_{{1,p}:\Omega}\leq E_0[ \|{\bf v}^{n}-{\bf v}^{m}\|_{{2,p}:\Omega}+\|{w}^{n}-{w}^{m}\|_{{1,p}:\Omega}]
\end{equation}
holds for all $({\bf v}^n,w^n)\in B_R$.
\end{lemma}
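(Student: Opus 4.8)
The plan is to prove the contraction estimate (\ref{diff-est}) by subtracting the iteration system (\ref{Itrn-sys}) at levels $n+1$ and $m+1$ and applying the linear a priori estimate (\ref{final-w2p}) to the resulting difference. Let me write $\delta{\bf v}={\bf v}^{n+1}-{\bf v}^{m+1}$, $\delta w=w^{n+1}-w^{m+1}$, and also $\delta{\bf v}^*={\bf v}^n-{\bf v}^m$, $\delta w^*=w^n-w^m$. The first step is to observe that the pair $(\delta{\bf v},\delta w)$ solves a linear system of exactly the form (\ref{Lin-sys}), but with $\bar{\bf v}$ replaced by ${\bf v}^n+\tilde{\bf u}$ in the principal coefficients, with homogeneous boundary data $w_{in}=0$ on $\tilde\Gamma_{in}$ (since $\rho_{in}-1$ is the same at both levels), and with right-hand sides that collect three types of terms: (i) the differences of the source terms $\tilde F({\bf v}^n,w^n)-\tilde F({\bf v}^m,w^m)$ and $\tilde G({\bf v}^n,w^n)-\tilde G({\bf v}^m,w^m)$; (ii) the differences coming from the non-constant coefficients, i.e. terms like $\big[(\bar U+({\bf v}^n+\tilde{\bf u})^{(1)})\circ\psi_{{\bf v}^n+\tilde{\bf u}}-(\bar U+({\bf v}^m+\tilde{\bf u})^{(1)})\circ\psi_{{\bf v}^m+\tilde{\bf u}}\big]\partial_{z_1}w^{m+1}$ and similarly for the convective term in the momentum equation; and (iii) the difference $\tilde B({\bf v}^n)-\tilde B({\bf v}^m)=-2\mu{\bf n}\cdot R({\bf v}^n-{\bf v}^m,D)\cdot\tau$ in the Navier boundary condition. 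Note that all of these are \emph{linear} in $(\delta{\bf v}^*,\delta w^*)$ up to coefficients that are small in the relevant norms.

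The second step is to estimate each of these right-hand side pieces in the norms demanded by (\ref{final-w2p}), namely $W^{1,p}$ for the mass source, $L^p$ for the momentum source, and $W^{1-\frac{1}{p},p}(\Gamma_0)$ for the boundary source, all controlled by a small constant times $\|\delta{\bf v}^*\|_{2,p}+\|\delta w^*\|_{1,p}$. For the source differences in (i), this is the Lipschitz analogue of Lemma \ref{FG-est} and the lemma following (\ref{souc-tilde}): since $\tilde F,\tilde G$ are built from products of the unknowns and $\tilde{\bf u}$, and since $({\bf v}^n,w^n),({\bf v}^m,w^m)\in B_R$, each term is a difference of products which expands via $ab-cd=(a-c)b+c(b-d)$ into pieces bounded by $C(R+D_0)(\|\delta{\bf v}^*\|_{2,p}+\|\delta w^*\|_{1,p})$, using $W^{1,p}\hookrightarrow L^\infty$ throughout exactly as in the proof of Lemma \ref{FG-est}. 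For the coefficient differences in (ii), the key point is that $\psi_{{\bf v}^n+\tilde{\bf u}}-\psi_{{\bf v}^m+\tilde{\bf u}}$ and the associated Jacobian matrices $\bf E$ depend on ${\bf v}^n-{\bf v}^m$ in a Lipschitz way — this follows by differentiating the defining ODE (\ref{Chge-Varb}) in the parameter and running the same Gronwall-type argument used to prove (\ref{small-jaco}) — so these terms are bounded by $E(\|\delta{\bf v}^*\|_{2,p})\cdot(\|w^{m+1}\|_{W^{1,p}\text{ with }\partial_{z_1}}+\|{\bf v}^{m+1}\|_{2,p})$, and since $({\bf v}^{m+1},w^{m+1})\in B_R$ with the extra bound $\|\partial_{z_1}w^{m+1}\|_{1,p}\le E(D_0)$ from the remark above, this too is small times $\|\delta{\bf v}^*\|_{2,p}+\|\delta w^*\|_{1,p}$. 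For (iii), $R(\cdot,D)$ is linear in its first slot with coefficients given by $\bf E$, which is small, so $\|\tilde B({\bf v}^n)-\tilde B({\bf v}^m)\|_{W^{1-\frac1p,p}(\Gamma_0)}\le E\|\delta{\bf v}^*\|_{2,p}$ by the trace theorem.

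The third and final step is to invoke (\ref{final-w2p}) for the linear system satisfied by $(\delta{\bf v},\delta w)$ — whose coefficients satisfy the smallness hypothesis since $\|{\bf v}^n+\tilde{\bf u}\|_{2,p}\le R+CD_0$ is small and $\mu>\frac{1}{2\pi^2}$ — to conclude
\begin{equation*}
\|\delta{\bf v}\|_{2,p;\Omega}+\|\delta w\|_{1,p;\Omega}\le C\big(R+D_0\big)\big(\|\delta{\bf v}^*\|_{2,p;\Omega}+\|\delta w^*\|_{1,p;\Omega}\big),
\end{equation*}
and then choose $R$ and $D_0$ small enough that $E_0:=C(R+D_0)<1$, which is consistent with the previous lemma's choice $R\le 2E(D_0)$ with $E(D_0)$ small. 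This gives (\ref{diff-est}) and, combined with the invariance of $B_R$, the Banach fixed point theorem then yields a unique fixed point of $T$ in $B_R$, i.e. a unique strong solution of (\ref{Z-cordin-sys}), which transforms back via $\psi$ to the desired solution of (\ref{S-NS}) and completes the proof of Theorem 1.1.

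The main obstacle I anticipate is step (ii): controlling the difference of the flow-map-dependent coefficients. One has to be careful that differentiating (\ref{Chge-Varb}) in the parameter $\lambda$ along the segment $\lambda({\bf v}^n+\tilde{\bf u})+(1-\lambda)({\bf v}^m+\tilde{\bf u})$ produces a linear inhomogeneous ODE for $\partial_\lambda\psi$ whose forcing involves $\nabla\tilde v$ and whose solution must be estimated in $W^{1,p}$ uniformly in $\lambda$; the structure is identical to the proof of the lemma establishing (\ref{small-jaco}), but the bookkeeping of which norm of ${\bf v}^{m+1}$ or $w^{m+1}$ multiplies the small factor is delicate, and it is precisely here that the auxiliary bound $\|\partial_{z_1}w^{m+1}\|_{1,p}\le E(D_0)$ — rather than merely $\|w^{m+1}\|_{1,p}\le R$ — is needed, because the coefficient difference in the mass equation multiplies $\partial_{z_1}w^{m+1}$, not $w^{m+1}$ itself.
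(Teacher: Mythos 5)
Your proposal is correct and follows essentially the same route as the paper: subtract the two iteration systems, treat the difference as a solution of the linear system (\ref{Lin-sys}) with homogeneous inflow data, estimate the source differences, the flow-map coefficient differences, and the boundary-term difference (the paper's $I_1$--$I_5$), and close with the linear $W^{2,p}$ estimate (\ref{final-w2p}) for small $R$ and $D_0$. You also correctly identify the key subtlety that the mass-equation coefficient difference multiplies $\partial_{z_1}w^{m+1}$, which is exactly why the paper invokes the auxiliary bound $\|\partial_{z_1}w^{m+1}\|_{1,p}\leq E(D_0)$ from the preceding remark.
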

\begin{proof}
Making difference to derive the system of $({\bf v}^{n+1}-{\bf v}^{m+1},{w}^{n+1}-{w}^{m+1})$,\,we have
\begin{equation}\label{diff-sys}
\begin{aligned}
\ &((\bar U +({{\bf v}^{n} +\tilde{\bf u}})^{(1)})\circ\psi_{{\bf v}^n+\tilde{\bf u}})\partial_{z_1}(w^{n+1}-w^{m+1})+\operatorname{div}_{z} ({\bf v}^{n+1}-{\bf v}^{m+1})\\
 &=\tilde F({\bf v}^{n},w^n)-\tilde F({\bf v}^{m},w^m)  \\
&-\partial_{z_1}w^{m+1}[((\bar U +({{\bf v}^{n} +\tilde{\bf u}})^{(1)})\circ\psi_{{\bf v}^n+\tilde{\bf u}})-((\bar U +({{\bf v}^{m} +\tilde{\bf u}})^{(1)})\circ\psi_{{\bf v}^m+\tilde{\bf u}})],\\
\ &(\bar U\circ\psi_{{\bf v}^n+\tilde{\bf u}}) \partial_{z_1} ({\bf v}^{n+1}-{\bf v}^{m+1})+({\bf v}^{n+1}-{\bf v}^{m+1})\cdot \nabla_{z} U_0+\gamma \nabla_{z} (w^{n+1}-w^{m+1})\\
&-\mu \Delta_{z} ({\bf v}^{n+1}-{\bf v}^{m+1})-(\mu+\nu) \nabla \operatorname{div}_{z}({\bf v}^{n+1}-{\bf v}^{m+1}) =\tilde G({\bf v}^n,w^n)-\tilde G({\bf v}^m,w^m)  \\
&-{\bf v}^{m+1} (\bar U\circ\psi_{{\bf v}^n+\tilde{\bf u}}-\bar U\circ\psi_{{\bf v}^m+\tilde{\bf u}}),\\
\ & {\bf v}^{n+1}- {\bf v}^{m+1} =0,\\
\ &w^{n+1}-w^{m+1} =0,\\
\ &({\bf v}^{n+1}-{\bf v}^{m+1}) \cdot \mathbf{n} =0, \\
& 2\mu\mathbf{n}\cdot D_z({\bf v}^{n+1}-{\bf v}^{m+1})\cdot \tau +\alpha({\bf v}^{n+1}-{\bf v}^{m+1})\cdot \tau =2\mu{\bf n}\cdot[R({v}^{m},D)-R({\bf v}^{n},D)]\cdot\tau.
\end{aligned}
\end{equation}
Applying $W^{2,p}$ estimates to the $(\ref{diff-sys})$,we obtain
\begin{equation}
\begin{aligned}
&\|{\bf v}^{n+1}-{\bf v}^{m+1}\|_{{2,p};\Omega}+\|w^{n+1}-w^{m+1}\|_{{1,p};\Omega}\\
&\leq C\{\|\partial_{z_1}w^{m+1}[((\bar U +({{\bf v}^{n} +\tilde{\bf u}})^{(1)})\circ\psi_{{\bf v}^n+\tilde{\bf u}})-((\bar U +({{\bf v}^{m} +\tilde{\bf u}})^{(1)})\circ\psi_{{\bf v}^m+\tilde{\bf u}})]\|_{1,p}\\
&+\|\tilde F({\bf v}^{n},w^n)-\tilde F({\bf v}^{m},w^m)\|_{1,p}+\|{\bf v}^{m+1} (\bar U\circ\psi_{{\bf v}^n+\tilde{\bf u}}-\bar U\circ\psi_{{\bf v}^m+\tilde{\bf u}})\|_{p}\\
&+\|\tilde G({\bf v}^n,w^n)-\tilde G({\bf v}^m,w^m)\|_{p}+\|2\mu{\bf n}\cdot[R({\bf v}^{m},D)-R({\bf v}^{n},D)]\cdot\tau\|_{1-\frac{1}{p},p;\Gamma_{0}}\}\\
&:=I_1+I_2+I_3+I_4+I_5
\end{aligned}
\end{equation}
Estimate of $I_1$.\, By the definition of $\psi,$ it  continuously dependent on 
the parameters ${\bf v}$,\, which implies that
\begin{equation}
|\psi_{{\bf v}^n+\tilde{\bf u}}-\psi_{{\bf v}^m+\tilde{\bf u}}|\leq C|{\bf v}^{n}-{\bf v}^{m}|
\end{equation}
for some positive constant $C$.\, Hence,
\begin{equation}
\begin{aligned}
|({{\bf v}^{n} +\tilde{\bf u}})^{(1)}\circ\psi_{{\bf v}^n+\tilde{\bf u}}-({{\bf v}^{m} +\tilde{\bf u}})^{(1)}\circ\psi_{{\bf v}^m+\tilde{\bf u}}|\\
\leq C[|\nabla({\bf v}^n+\tilde{\bf u})^{(1)}||{\bf v}^{n}-{\bf v}^{m}|+|{\bf v}^{n}-{\bf v}^{m}|]
\end{aligned}
\end{equation}
Consequently,
\begin{equation}
\begin{aligned}
I_1&\leq \|\partial_{z_1}w^{m+1}\|_{1,p}\|{\bf v}^{n}-{\bf v}^{m}\|_{2,p}\\
&\leq E(D_0)\|{\bf v}^{n}-{\bf v}^{m}\|_{2,p}
\end{aligned}
\end{equation}
 It's easy to show that the same estimates holds for $I_3$.\\
Estimates of $I_2$.\, Note that (\ref{deriv-in-x-z}) implies that
\begin{equation} 
\begin{aligned}
|R(\cdot,\partial)|\leq E\|\cdot\|_{1,p},\\
|R(\cdot,\partial^2)|\leq E\|\cdot\|_{2,p}.\\
\end{aligned}
\end{equation}
Hence, to show the bounds of $I_2$,\, it sufficient to estimates $\|F({\bf v}^n,w^n)-F({\bf v}^m,w^m)\|_{1,p}$.
\begin{equation}
  \begin{aligned}
   F({\bf v}^n,w^n)-F({\bf v}^m,w^m)&=w^n\divg ({\bf v}^n+{\bf\tilde u})-w^m\divg ({\bf v}^m+{\bf\tilde u})\\
   &=w^n\divg{\bf v}^m-w^m\divg{\bf v}^m+(w^n-w^m)\divg{\bf\tilde u},
  \end{aligned}
\end{equation}
and
\begin{equation}
  \begin{aligned}
   &w^n\divg{\bf v}^n-w^m\divg{\bf v}^m\\
   &=w^n[\divg_z{\bf v}^n+R({\bf v}^n,\divg)]-w^m[\divg_z{\bf v}^m+R({\bf v}^m,\divg)]\\
   &=(w^n-w^m)\divg{\bf v}^n+w^m\divg({\bf v}^n-{\bf v}^m)\\
   &=(w^n-w^m)R({\bf v}^n,\divg)+w^mR({\bf v}^n-{\bf v}^m,\divg),
  \end{aligned}
\end{equation}
then it is immediate that
\begin{equation}
  \begin{aligned}
   \|F({\bf v}^n,w^n)-F({\bf v}^m,w^m)\|_{1,p}\leq C(D_0)(\|w^n-w^m\|_{1,p}+\|{\bf v}^{n}-{\bf v}^{m}\|_{2,p}).
  \end{aligned}
\end{equation}
Estimates of $I_4$.\, Due to the same reason, we just need to estimates 
$\|G({\bf v}^n,w^n)-G({\bf v}^m,w^m)\|_{p}$.\, For the convenience, denote 
$\delta P^{\prime}(w)=P^{\prime}(w+1)-P^{\prime}(w)$, then
\begin{equation}
\begin{aligned}
&\delta P^{\prime}(w^n)\nabla_x w^n-\delta P^{\prime}(w^m)\nabla_x w^m\\
&=\delta P^{\prime}(w^n)\nabla_x (w^n-w^m)-\nabla_x w^m(\delta P^{\prime}(w^m)-\delta P^{\prime}(w^n))\\
&=\delta P^{\prime}(w^n)[\nabla_z (w^n-w^m)+R(w^n-w^m,\nabla)]\\
&+[\nabla_z w^m+R(w^m,\nabla)](\delta P^{\prime}(w^m)-\delta P^{\prime}(w^n))
\end{aligned}
\end{equation}
which gives
\begin{equation}
\|\delta P^{\prime}(w^n)\nabla_x w^n-\delta P^{\prime}(w^m)\nabla_x w^m\|_{p}\leq E(\|w^n\|_{1,p},\|w^m\|_{1,p})\|w^n-w^m\|_{1,p}.
\end{equation}
The estimates of reminder terms are similar, so we show one of them as a example.
\begin{equation}
\begin{aligned}
&w^{n}{\bf v}^{n}\cdot\nabla_x {\bf v}^{n}-w^{m}{\bf v}^{m}\cdot\nabla_x {\bf v}^{m}\\
&=w^{n}{\bf v}^{n}\cdot[\nabla_z {\bf v}^{n}+R({\bf v}^n,\nabla)]-w^{m}{\bf v}^{m}\cdot[\nabla_z {\bf v}^{m}+R({\bf v}^m,\nabla)]\\
&=(w^n-w^m){\bf v}^{n}\cdot\nabla_z{\bf v}^n+w^m({\bf v}^n-{\bf v}^m)\cdot\nabla_z{\bf v}^n+w^m{\bf v}^m\cdot\nabla_z ({\bf v}^n-{\bf v}^m),  \\
&+(w^n-w^m){\bf v}^{n}\cdot R({\bf v}^n,\nabla)+w^m({\bf v}^n-{\bf v}^m)\cdot R({\bf v}^n,\nabla)+w^m{\bf v}^m R({\bf v}^n-{\bf v}^m,\nabla),
                                                                                                                                                                                                                                \end{aligned}
\end{equation}
then, we have
\begin{equation}
  \begin{aligned}
    \|G({\bf v}^n,w^n)-G({\bf v}^m,w^m)\|_{p}\leq C(D_0)(\|w^n-w^m\|_{1,p}+\|{\bf v}^{n}-{\bf v}^{m}\|_{2,p}).
  \end{aligned}
\end{equation}
Estimates of $I_5$.\,
\begin{equation}
\begin{aligned}
\|&2\mu{\bf n}\cdot[R({\bf v}^{m},D)-R({\bf v}^{n},D)]\cdot\tau\|_{1-\frac{1}{p},p;\Gamma_{0}}\\
&\leq\|R({\bf v}^{m}-{\bf v}^n,D)\|_{1,p}\\
&\leq C(E)\|{\bf v}^{n}-{\bf v}^{m}\|_{2,p}.
\end{aligned}
\end{equation}
Due to the smallness assumption of $C(\cdot)$, one can choose a small constant $0<E_0<1$ independent of $n$ fulfills  (\ref{diff-est}).
\end{proof}
{\bf Proof of Theorem 1.}we have prove that the operator $T$ is contraction mapping on $B_R\in W^{2,p}\times W^{1,p}$ for some small $R=R(D_0)$. Hence the Banach fixed point theorem gives existence of a unique fixed point in the ball $B_R$. By the definition of $T$, the fixed point is exactly the solution of the system (\ref{Z-cordin-sys}). Finally, we can change variable to the original coordinate, and in $x$ variable our solution satisfies the system (\ref{Lin-Homo-sys}). The proof is thus complete.

\bibliographystyle{amsplain}

\end{document}